\theoremstyle{plain} 
\newtheorem{thm}{Theorem} 
\newtheorem{lem}{Lemma}
\newtheorem{prop}{Proposition}
\newtheorem{cor}{Corollary}
\newtheorem{rem}{Remark}
\providecommand{\skp}[2]{\langle#1,#2\rangle}
\providecommand{\sm}{\setminus}
\providecommand{\N}{\mathbb{N}}
\providecommand{\R}{\mathbb{R}}
\providecommand{\C}{\mathbb{C}}
\providecommand{\ind}{\mathds{1}}
\providecommand{\eps}{\varepsilon}
\providecommand{\ov}{\overline}
\providecommand{\skp}[2]{\langle#1,#2\rangle}
\providecommand{\les}{\lesssim}
\DeclareMathOperator{\supp}{supp}
\DeclareMathOperator{\sign}{sign}
\DeclareMathOperator{\Real}{Re}
\DeclareMathOperator{\rad}{rad}
\renewcommand{\qed}{\hfill $\Box$}
\renewcommand{\r}[1]{\textcolor{red}{#1}}
\renewcommand{\b}[1]{\textcolor{blue}{#1}}
\begin{document}

\allowdisplaybreaks

\title[Dispersive estimates  for the Schr\"odinger equation]{Dispersive estimates, blow-up and failure of
Strichartz estimates for the Schr\"odinger equation with slowly decaying initial data}

\author{Rainer Mandel}
\address{R. Mandel \hfill\break
Karlsruhe Institute of Technology \hfill\break
Institute for Analysis \hfill\break
Englerstra{\ss}e 2 \hfill\break
D-76131 Karlsruhe, Germany}
\email{Rainer.Mandel@kit.edu}
\date{}

\subjclass[2000]{Primary: 35Q41, Secondary: 35B44, 35B40}
\keywords{Schr\"odinger equation, Failure of Strichartz estimates}

\begin{abstract}
  The initial value problem for the homogeneous Schr\"odinger equation is investigated for
  radially symmetric initial data with slow decay rates and not too wild oscillations.
  Our global wellposedness results apply to initial data for which
  Strichartz estimates fail.
\end{abstract}

\maketitle
\allowdisplaybreaks

\section{Introduction} 

In this paper we investigate the initial value problem for the Schr\"odinger equation     
\begin{equation}\label{eq:SchroedingerFlow}
  i\partial_t \psi + \Delta \psi = 0\quad\text{in }\R^n,\qquad \psi(0)=\phi
\end{equation}
for radial initial data $\phi$ with slow decay at infinity. In particular, we are interested in a solution
theory for~\eqref{eq:SchroedingerFlow} without assuming $\phi$ to belong to one of the Lebesgue spaces $L^r(\R^n)$
with $r\in [1,2]$. In this case Strichartz estimates are not available and local or
global well-posedness results for~\eqref{eq:SchroedingerFlow} are unknown. Surprisingly, we could not find a
complete statement about Strichartz estimates for such initial data in the literature, so we 
clarify this point here. 

 \begin{thm} \label{thm:NoStrichartz}
    Let $n\in\N,n\geq 2$ and $p,q\in [1,\infty], r>2$. Then there is no Strichartz estimate 
    \begin{equation}\label{eq:Strichartz_est}
      \|e^{it\Delta} \phi\|_{L^p_t(\R;L^q(\R^n))} \les \|\phi\|_{L^r(\R^n)}.
    \end{equation}
 \end{thm}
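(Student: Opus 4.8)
The plan is to argue by contradiction, via a scaling reduction followed by an explicit focusing family; in effect this is a quantitative version of the focusing/blow-up phenomenon that the rest of the paper is built around. Assume \eqref{eq:Strichartz_est} holds for some $p,q\in[1,\infty]$ and some $r>2$. I would first test it on the dilates $\phi_\lambda:=\phi(\lambda\,\cdot)$ of a fixed nonzero $\phi\in L^r(\R^n)$. Using $e^{it\Delta}\phi_\lambda=(e^{i\lambda^2t\Delta}\phi)(\lambda\,\cdot)$ together with the homogeneities $\|\phi_\lambda\|_{L^r}=\lambda^{-n/r}\|\phi\|_{L^r}$ and $\|e^{it\Delta}\phi_\lambda\|_{L^p_t(\R;L^q)}=\lambda^{-n/q-2/p}\|e^{it\Delta}\phi\|_{L^p_t(\R;L^q)}$, \eqref{eq:Strichartz_est} forces
\[
  \frac{2}{p}+\frac{n}{q}=\frac{n}{r}
\]
(otherwise letting $\lambda\to0$ or $\lambda\to\infty$ contradicts the bound, since \eqref{eq:Strichartz_est} makes $\|e^{it\Delta}\phi\|_{L^p_t(\R;L^q)}$ finite while it is clearly positive). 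Since $r>2$, the admissible pair therefore lies strictly below the classical Strichartz line, $\tfrac2p+\tfrac nq=\tfrac nr<\tfrac n2$, and it remains to rule out \eqref{eq:Strichartz_est} on that line.

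For this I would use a family of data carrying a quadratic phase. Fix $\psi\in C_c^\infty(\R^n)$ with $\psi\ge0$ and $\int_{\R^n}\psi>0$, and for $s\ge1$ set $\phi_s(x):=e^{is|x|^2}\psi(x)$; since $e^{is|x|^2}$ is unimodular, $\|\phi_s\|_{L^r}=\|\psi\|_{L^r}$ is independent of $s$. Completing the square in the Schr\"odinger kernel gives, for $t\ne0$,
\[
  e^{it\Delta}\phi_s(x)=(4\pi i t)^{-n/2}e^{i|x|^2/(4t)}\int_{\R^n}e^{-ix\cdot y/(2t)}\,e^{i(s+\frac1{4t})|y|^2}\psi(y)\,dy ,
\]
and the residual quadratic phase $e^{i(s+\frac1{4t})|y|^2}$ collapses at the focusing time $t_s:=-\tfrac1{4s}$, where $s+\tfrac1{4t_s}=0$. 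There $|e^{it_s\Delta}\phi_s(x)|=(4\pi|t_s|)^{-n/2}\bigl|\widehat\psi\bigl(x/(2t_s)\bigr)\bigr|$, and since $|t_s|=1/(4s)$ and $\widehat\psi$ is continuous with $\widehat\psi(0)=\int\psi>0$, this is $\gtrsim s^{n/2}$ on a ball $\{|x|\lesssim s^{-1}\}$. Because $\tfrac{d}{dt}\bigl(s+\tfrac1{4t}\bigr)\big|_{t=t_s}\sim-s^2$ and $\psi$ is compactly supported, a first-order Taylor expansion of the residual phase on $\supp\psi$ keeps $|e^{it\Delta}\phi_s(x)|\gtrsim s^{n/2}$ for all $(t,x)$ with $|t-t_s|\lesssim s^{-2}$ and $|x|\lesssim s^{-1}$.

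Integrating over this space-time region, and reading $p=\infty$ or $q=\infty$ in the obvious limiting sense, I obtain
\[
  \|e^{it\Delta}\phi_s\|_{L^p_t(\R;L^q(\R^n))}\ \gtrsim\ (s^{-2})^{1/p}\,\bigl(s^{n/2}\,(s^{-n})^{1/q}\bigr)\ =\ s^{n/2-n/q-2/p}\ =\ s^{n/2-n/r},
\]
using the scaling relation from the first step. As $r>2$, the exponent $\tfrac n2-\tfrac nr$ is positive, so the left-hand side diverges as $s\to\infty$ while $\|\phi_s\|_{L^r}=\|\psi\|_{L^r}$ stays constant; this contradicts \eqref{eq:Strichartz_est} and proves the theorem.

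The scaling bookkeeping in the first step is routine, as is the Taylor estimate for the residual phase (elementary since $\psi$ is compactly supported, so $\bigl|(s+\tfrac1{4t})|y|^2\bigr|\lesssim s^2|t-t_s|$ there). The point I expect to need the most care is the uniformity in the second step: the lower bound $|e^{it\Delta}\phi_s|\gtrsim s^{n/2}$ has to hold on a full time interval of length $\sim s^{-2}$ and a full spatial ball of radius $\sim s^{-1}$, not merely at the single instant $t_s$ — it is precisely this two-parameter region, rather than a pointwise spike, that makes the space-time norm blow up. Choosing the implicit constants in "$|t-t_s|\lesssim s^{-2}$" and "$|x|\lesssim s^{-1}$" small enough, both the Taylor remainder and the deviation of $\widehat\psi$ from $\widehat\psi(0)$ are controlled, and this is where the care is spent.
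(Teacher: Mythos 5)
Your proof is correct, but it takes a genuinely different route from the paper's. You run the classical focusing (pseudoconformal) counterexample: a family $\phi_s(x)=e^{is|x|^2}\psi(x)$ of Schwartz data whose $L^r$ norm is constant in $s$ for \emph{every} $r$, whose evolutions concentrate at $t_s=-\tfrac{1}{4s}$ on a ball of radius $\sim s^{-1}$ over a time window of length $\sim s^{-2}$, giving $\|e^{it\Delta}\phi_s\|_{L^p_t L^q}\gtrsim s^{\frac n2-\frac nq-\frac 2p}$; combined with the scaling relation $\tfrac 2p+\tfrac nq=\tfrac nr$ this forces $r\leq 2$. The paper instead takes a \emph{single} slowly decaying radial datum $\phi(x)=e^{-i|x|^2/4}\ind_{|x|\geq 1}|x|^{-\sigma}$ with $\tfrac{n-3}{2}<\sigma<n$, which lies in $L^r$ for $\sigma>\tfrac nr$ but in general not in $L^2$, proves a self-similar lower bound $|\psi(x,t)|\gtrsim k_t^{\sigma-n}$ on an annulus of radius $\sim k_t=\sqrt{\tfrac{1}{4t}-\tfrac14}$ as $t\to 1$, integrates this to obtain the necessary condition $p<\tfrac{2q}{((n-\sigma)q-n)_+}$, optimizes in $\sigma$, and only then invokes the same scaling relation. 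Your argument is more elementary and self-contained (no radial solution formula, no Bessel asymptotics, no limit in $\sigma$), and since the concentration occurs inside a fixed bounded time interval and $\psi$ may be chosen radial, it likewise rules out local-in-time and radially improved estimates. What the paper's construction buys in exchange is an explicit initial datum in $L^r\setminus L^2$ whose solution genuinely leaves $L^q(\R^n)$ in finite time --- an instance of the dispersive blow-up that motivates the rest of the paper --- whereas your family only exhibits non-uniformity of the constant along a sequence of individually well-behaved data. The two points requiring care in your write-up are handled correctly: positivity and finiteness of $\|e^{it\Delta}\phi\|_{L^p_tL^q}$ in the scaling step, and the uniformity of the lower bound on the full space-time box, which follows from $\bigl|s+\tfrac{1}{4t}\bigr|=\tfrac{|t-t_s|}{4|t|\,|t_s|}\lesssim 1$ on $|t-t_s|\leq c\,s^{-2}$ together with the continuity of $\widehat\psi$ at the origin.
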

  
 This theorem partly generalizes the known fact that for any given $t>0$ the Schr\"odinger propagator
 $e^{it\Delta}$ is unbounded as a map from $L^r(\R^n)$ to $L^q(\R^n)$ for all $r>2,q\in [1,\infty]$,
 cf.~\cite[p.63]{LinPon_Dispersive} for the case $q=r$. Theorem~\ref{thm:NoStrichartz} may seem surprising in
 view of the fact that the optimal conditions for Strichartz estimates in the most important special case
 $r=2$ do not provide any obvious reason why the estimates should break down completely for $r>2$.
 Recall that these conditions are given by 
 $$
   p,q\geq 2, \qquad (p,q,n)\neq (2,\infty,2),\qquad \frac{2}{p}+\frac{n}{q}=\frac{n}{2},
 $$
 see for instance \cite[Theorem~2.3.3]{Caz_Semilinear}. We refer to the papers~\cite{Stri,KeelTao,GiVe} 
 for three milestone contributions related to the discovery of these conditions. At least for $n\geq 3$, each
 of the above conditions has a counterpart in the range $r>2$. The scaling invariance of the Schr\"odinger equation implies
 $\frac{2}{p}+\frac{n}{q}=\frac{n}{r}$ so that $q\geq r$ would be an immediate consequence that replaces
 the condition $q\geq 2$. As we discuss in the Appendix $p\geq 2$ generalizes to $p\geq
 \frac{2r}{(2n-r(n-1))_+}$. In particular, there is no evident reason for the necessity of $r\leq
 2$ so that Theorem~\ref{thm:NoStrichartz} seems to fill a gap in the literature.
 Its short proof relies on a thorough analysis of a counterexample due to Bona, Ponce, Saut
 and Sparber~\cite{BoPoSaSp_DispersiveBlowup}. The main feature of their solution is that the corresponding
 initial datum oscillates quadratically with respect to the distance to the origin, which produces blow-up of the solution at some
 prescribable finite time, cf. \cite[Lemma~2.1]{BoPoSaSp_DispersiveBlowup}. We reconsider this
 self-similar blow-up analysis for partly more general initial data and estimate the blow-up rate in
 $L^q(\R^n)$, which eventually leads to Theorem~\ref{thm:NoStrichartz}. 
 Accordingly, our proof even reveals that local Strichartz estimates cannot hold either and that no
 improvement in the radial situation is possible.
 
 \medskip
 
 Given that Strichartz estimates fail, the question arises how wellposedness results for the
 Schr\"odinger equation can be achieved if the initial datum lies in $L^r(\R^n)$ only for $r>2$. 
 In view of the above-mentioned counterexample it seems reasonable to impose a condition on the oscillations
 of the initial datum. In the following we present one possible approach in the radially symmetric case which
 relies on suitably weighted Sobolev norms of the initial data. For instance we identify a class of initial
 data lying in $L^r(\R^n)$ only for $r>\frac{2n}{n-1}$ with solutions that are bounded in time and uniformly
 localized in space, see Corollary~\ref{cor}. In that case dispersion need not occur because there are
 solutions of the form 
 \begin{equation} \label{eq:Herglotzwaves}
   \psi(x,t)=e^{-i\omega^2 t}\phi(x)
   \qquad\text{where }
   \phi(x)=|x|^{\frac{2-n}{2}}J_{\frac{n-2}{2}}(\omega |x|)
   \quad\text{for some }\omega\in\R.
 \end{equation} 
 Here, $J_{(n-2)/2}$ denotes the Bessel function of the first kind and $\phi$ solves the
 linear Helmholtz equation $\Delta \phi+\omega^2\phi=0$ in $\R^n$. Aiming for a more general result in this
 direction, we first consider initial profiles of the form $\phi(x)=e^{i\omega |x|}\phi_\omega(|x|)$ where
 $\phi_\omega$ belongs to the function spaces $X$ respectively $Y_m$ for some
 $m\in\{0,\ldots,n\}$ that we introduce now. The space $X$ is defined to be the completion of $C_c^\infty(\R_{\geq 0};\C)$ 
 with respect to the norm $\|\cdot\|_{X}:=\|\cdot\|_{X_1}+\|\cdot\|_{X_2}$
 given by
 \begin{align*}
   \|f\|_{X_1}
    &:=\sup_{z> 0} z^{\frac{1-n}{2}} \int_0^z \left(|f(r)| r^{n-2} + |f'(r)|  r^{n-1} \right)  \,dr, \\
    \|f\|_{X_2}
    &:= \int_0^\infty   \left|\frac{d}{dr}\left( f(r)r^{\frac{n-1}{2}}\right)\right| \,dr
         +\sup_{z> 0}   \left( z\int_z^\infty |f(r)|r^{\frac{n-5}{2}} \,dr  \right).
 \end{align*}
 Similarly, we define $Y_m$ to be the completion of  $C_c^\infty(\R_{\geq 0};\C)$ with respect to the norm
 \begin{align*}
      \|f\|_{Y_m} 
      &:= \sum_{k=0}^m \int_0^\infty |f^{(k)}(r)| r^{n-m+k-1}\,dr \qquad\qquad\text{if
      }m\in\{0,\ldots,n-1\},    \\
      \|f\|_{Y_n} 
      &:= \sum_{k=1}^n \int_0^\infty |f^{(k)}(r)| r^{k-1}\,dr + \sup_{z> 0} z^{-2}\int_0^z
      f(r)r\,dr + |f(0)|.
 \end{align*}
 One main feature of these spaces is that slow decay rates of its elements are admissible only provided that
 their derivatives decay fast enough.  For instance, we have that $r\mapsto (1+r)^{-\alpha}$ lies in $X$ if
 and only if $\alpha\geq \frac{n-1}{2}$ and in $Y_m$ if and only if $\alpha>n-m$
 whereas $r\mapsto e^{ir}(1+r)^{-\alpha}$ belongs to $X$ if and only if $\alpha>
 \frac{n+1}{2}$ and to $Y_m$ if and only if $\alpha>n$. 
 Using these spaces we find a local well-posedness theory for at most linearly oscillating radial initial
 data.
 
\begin{thm}\label{thm:dispest}
   Let $n\in\N, n\geq 2, m\in\{0,\ldots,n\}$ and $\phi(x)=\phi_\omega(|x|)e^{i\omega |x|}$ for some
   $\omega\in\R$.
  \begin{itemize}
    \item[(i)] If $\phi_\omega\in Y_m$  then~\eqref{eq:SchroedingerFlow} has a
    unique global mild solution $\psi$ satisfying
    \begin{align*}
      |\psi(x,t)| \leq C(\sqrt t)^{m-n} \|\phi_\omega\|_{Y_m}. 
    \end{align*}
   \item[(ii)] If $\phi_\omega\in X$  then \eqref{eq:SchroedingerFlow} has a unique global mild solution
    $\psi$ satisfying
    $$
      |\psi(x,t)| \leq C|x|^{\frac{1-n}{2}} \|\phi_\omega\|_X.
    $$
    \end{itemize}
    In (i) and (ii) the constant $C$ does not depend on $\omega$.
  \end{thm}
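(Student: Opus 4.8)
The plan is to write the mild solution via the free Schr\"odinger kernel and extract pointwise bounds by exploiting radial symmetry and integrating by parts in the radial variable. Recall that for radial $\phi$ the propagator acts as
\[
  (e^{it\Delta}\phi)(x) = (4\pi i t)^{-n/2}\int_{\R^n} e^{i|x-y|^2/(4t)}\phi(y)\,dy,
\]
and after passing to polar coordinates the angular integral produces a Bessel function, so that $\psi(x,t)=\psi(|x|,t)$ is given by an oscillatory integral of the form $c\,t^{-n/2}\int_0^\infty e^{i(r^2+\rho^2)/(4t)} K_n(r\rho/(2t))\,\phi_\omega(\rho)e^{i\omega\rho}\rho^{n-1}\,d\rho$ with $r=|x|$ and $K_n$ built from $J_{(n-2)/2}$. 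The first step is to record the standard asymptotics and uniform bounds for this Bessel kernel, separating the regimes where the Bessel argument is small (giving a power of $r\rho/t$) and large (giving decay $(r\rho/t)^{(1-n)/2}$ together with an oscillatory factor $e^{\pm ir\rho/(2t)}$). Completing the square then turns the phase into $(r\pm\rho)^2/(4t)$ up to harmless constants.

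For part (i), I would fix the scaling $s=\sqrt t$ and aim for the bound $|\psi|\les s^{m-n}\|\phi_\omega\|_{Y_m}$. The key move is to integrate by parts $k$ times in $\rho$, $0\le k\le m$: each integration by parts against $e^{i(r\pm\rho)^2/(4t)}$ costs a factor $t/(r\pm\rho)$ but lands a derivative on $\phi_\omega$ (and on the slowly varying Bessel amplitude and the $\rho^{n-1}$ weight), so that the resulting integrand is controlled by $|\phi_\omega^{(k)}(\rho)|\rho^{n-1-?}$ against powers of $t$ and of $\rho/t$. Choosing in each region the value of $k$ that makes the $\rho$-integral converge — small $\rho$ forces few integrations by parts, large $\rho$ allows $k=m$ — and summing the contributions reproduces exactly the $m+1$ terms $\int_0^\infty|\phi_\omega^{(k)}(\rho)|\rho^{n-m+k-1}\,d\rho$ defining $\|f\|_{Y_m}$ for $m\le n-1$; the case $m=n$ is the boundary case where the lowest-order term degenerates and is replaced by the averaged quantity $\sup_z z^{-2}\int_0^z f(r)r\,dr$ together with $|f(0)|$, which is precisely what comes out of the non-oscillatory ($r\rho/t$ small) part of the kernel near $\rho=0$. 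Uniqueness and the mild-solution property follow from a density argument: for $\phi_\omega\in C_c^\infty$ the formula is classical, and the derived estimate shows $e^{it\Delta}$ extends continuously, so the limit is the unique mild solution.

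For part (ii), the target bound $|\psi(x,t)|\les |x|^{(1-n)/2}\|\phi_\omega\|_X$ has no $t$-decay, so the argument must be uniform in $t$ and genuinely use the two pieces of the $X$-norm. Here I would split the $\rho$-integral at $\rho\sim r$. On $\rho\lesssim r$ one uses the small-argument bound for the Bessel kernel together with the $X_1$-part: writing $\phi_\omega(\rho)\rho^{n-1}$ in terms of its primitive and integrating by parts once transfers the estimate onto $\int_0^z(|f|\rho^{n-2}+|f'|\rho^{n-1})\,d\rho$ and the factor $z^{(1-n)/2}$ is exactly the gain. On $\rho\gtrsim r$ one uses the large-argument oscillatory bound, which supplies the decay $(r\rho/t)^{(1-n)/2}$; one integration by parts against the phase $e^{i(r-\rho)^2/(4t)}$ produces the term $\int_0^\infty|\tfrac{d}{d\rho}(f(\rho)\rho^{(n-1)/2})|\,d\rho$, while the boundary/stationary contribution near $\rho\sim r$ is handled by the tail quantity $\sup_z z\int_z^\infty|f|\rho^{(n-5)/2}\,d\rho$; again the surviving $\rho$-powers combine to $r^{(1-n)/2}$.

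The main obstacle I anticipate is making the integration-by-parts scheme uniform in the two dangerous zones: near $\rho\sim r$ (where the phase $(r-\rho)^2/(4t)$ is stationary, so one cannot integrate by parts and must instead use a crude $L^1$ bound, which is why the $\sup_z$ quantities appear in $X_2$ and $Y_n$) and near $\rho=0$ (where the $\rho^{n-1}$ weight competes with boundary terms from integration by parts, forcing the restriction $k\le m$ and the special structure of $Y_n$). Tracking the exact powers of $r$, $\rho$ and $t$ through these case splits — and checking that the $\omega$-dependence genuinely drops out because $\omega$ enters only through the smooth amplitude $e^{i\omega\rho}$, whose derivatives contribute bounded factors after the change of variables absorbing the linear oscillation into the phase — is the delicate bookkeeping at the heart of the proof.
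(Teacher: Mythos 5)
Your overall skeleton matches the paper's: the free-kernel representation in polar coordinates producing a Bessel function, the splitting of $z^{n/2}J_{(n-2)/2}(z)$ into a small-argument part and two oscillatory large-argument parts, repeated integration by parts in the radial variable to trade oscillation for derivatives of $\phi_\omega$, and a density argument in $C_c^\infty$ to conclude. However, there is a genuine gap at the technical heart of the argument. You propose to integrate by parts against the completed-square phase $e^{i(r\pm\rho)^2/(4t)}$, ``costing a factor $t/(r\pm\rho)$'' per step, and to handle the stationary region $\rho\sim r$ by a crude $L^1$ bound. This is the standard non-stationary-phase scheme, and it produces interior boundary terms at the edges of the stationary region: pointwise values of $\phi_\omega^{(j)}$ and of the Bessel amplitude at specific radii. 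The norms $Y_m$ and $X$ are $L^1$-type and do not control such pointwise values, so the scheme as described does not close without substantial extra work. The paper avoids this entirely with a different device: after rescaling so that the phase is $\rho^2+a_j\rho$ (with $\omega$ and $|x|/t$ absorbed into the linear coefficients $a_j$), it integrates by parts on all of $[0,\infty)$ against the iterated antiderivatives $\Xi^m_a(s)$ of $e^{i(\rho^2+a\rho)}$ and proves (Proposition~\ref{prop:Xia}) that these are bounded uniformly in $a\in\R$ and $s\geq 0$. This single uniform bound is what makes the stationary point invisible, kills all interior boundary terms (the only boundary terms are at $\rho=0$, where they vanish by the support properties of $A_n,B_n$ except for the single term $|g_{1,a_1}^{(n-1)}(0)|$ when $m=n$), and delivers the claimed independence of the constant from $\omega$. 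Your proposal asserts the $\omega$-uniformity but does not identify this mechanism.

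A second, related inaccuracy: you attribute the $\sup_z$ quantities in $X_2$ and $Y_n$ to the crude bound near the stationary point. In the paper they arise elsewhere: the term $\sup_z z\int_z^\infty |f(r)|r^{\frac{n-5}{2}}\,dr$ in $X_2$ comes from the derivative falling on the amplitude $B_n(z)z^{\frac{1-n}{2}}$, whose derivative decays like $|z|^{-2}$ (Proposition~\ref{prop:AnBnProperties}); the term $\sup_z z^{-2}\int_0^z f(r)r\,dr$ in $Y_n$ comes from the degenerate estimate $|A_n^{(n)}(z)|\lesssim |z|$ near the origin; and $|f(0)|$ is the boundary term at $\rho=0$ in the $(n-1)$-fold integration by parts. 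Getting these attributions right is not cosmetic: they dictate exactly which weighted integrals must appear in the norms, which is the ``delicate bookkeeping'' your proposal defers. As written, the proposal is a plausible plan rather than a proof, and the route it sketches would need to be repaired at the stationary point before it could yield the stated estimates.
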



   Combining the estimates (i) and (ii) we deduce the following.
    
  \begin{cor} \label{cor}
     Let $n\in\N, n\geq 2$ and assume $\phi(x)= \int_\R  \phi_\omega(|x|)e^{i\omega |x|}\,d\mu(\omega)$ for
     some Borel measure $\mu$ on $\R$. Then~\eqref{eq:SchroedingerFlow} has a unique global mild solution
     satisfying 
     $$
       |\psi(x,t)| \leq C(1+|x|)^{\frac{1-n}{2}} \int_\R
       \big(\|\phi_\omega\|_X+\|\phi_\omega\|_{Y_n}\big)\,d\mu(\omega) 
     $$ 
     provided the right hand side is finite.
  \end{cor}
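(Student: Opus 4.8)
The plan is to use the linearity of the free Schrödinger flow and superpose the solutions furnished by Theorem~\ref{thm:dispest}. For $\mu$-a.e.\ $\omega\in\R$ the datum $x\mapsto\phi_\omega(|x|)e^{i\omega|x|}$ satisfies the hypotheses of both parts of Theorem~\ref{thm:dispest} with $m=n$ in part (i), hence it admits a unique global mild solution $\psi_\omega$ obeying simultaneously
\begin{equation*}
  |\psi_\omega(x,t)|\leq C|x|^{\frac{1-n}{2}}\|\phi_\omega\|_X
  \qquad\text{and}\qquad
  |\psi_\omega(x,t)|\leq C\|\phi_\omega\|_{Y_n},
\end{equation*}
the second estimate being part (i) with $(\sqrt t)^{m-n}=1$; crucially $C=C(n)$ is independent of $\omega$. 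Using the first bound on $\{|x|\geq 1\}$ and the second on $\{|x|\leq 1\}$, together with the elementary inequalities $|x|^{\frac{1-n}{2}}\leq 2^{\frac{n-1}{2}}(1+|x|)^{\frac{1-n}{2}}$ for $|x|\geq 1$ and $1\leq 2^{\frac{n-1}{2}}(1+|x|)^{\frac{1-n}{2}}$ for $|x|\leq 1$, one obtains the uniform bound
\begin{equation*}
  |\psi_\omega(x,t)|\leq C'(1+|x|)^{\frac{1-n}{2}}\bigl(\|\phi_\omega\|_X+\|\phi_\omega\|_{Y_n}\bigr)
  \qquad\text{for all }x\in\R^n,\ t\in\R,
\end{equation*}
with $C'=C'(n)$.

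Next I would set $\psi(x,t):=\int_\R\psi_\omega(x,t)\,d\mu(\omega)$. By the last display this integral converges absolutely and locally uniformly as soon as $\int_\R(\|\phi_\omega\|_X+\|\phi_\omega\|_{Y_n})\,d\mu(\omega)<\infty$, and it immediately yields the asserted pointwise estimate for $\psi$. It remains to verify that $\psi$ is the unique mild solution of \eqref{eq:SchroedingerFlow} with datum $\phi=\int_\R\phi_\omega(|\cdot|)e^{i\omega|\cdot|}\,d\mu(\omega)$. This is a Fubini argument: testing the distributional formulation of each $\psi_\omega$ against $\chi\in C_c^\infty(\R^n\times\R)$ and integrating in $\mu$, the product $\psi_\omega(x,t)(i\partial_t+\Delta)\chi(x,t)$ is dominated by $C'(1+|x|)^{\frac{1-n}{2}}(\|\phi_\omega\|_X+\|\phi_\omega\|_{Y_n})|(i\partial_t+\Delta)\chi(x,t)|$, which is integrable with respect to $dx\,dt\,d\mu(\omega)$ because $\chi$ has compact support and the $\omega$-integral of the norms is finite. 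Hence one may interchange $\int_\R d\mu(\omega)$ with the spatial/temporal integration and conclude that $\psi$ solves the equation distributionally with datum $\phi$; the same interchange at the level of the initial trace (respectively the time-continuity built into the notion of mild solution) gives $\psi(0)=\phi$, and uniqueness is inherited from the uniqueness in Theorem~\ref{thm:dispest} applied in the function class singled out by the above bound.

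The comparison of $|x|^{\frac{1-n}{2}}$ with $(1+|x|)^{\frac{1-n}{2}}$ and the tracking of constants are routine. The one step requiring genuine care is the interchange argument: one must check that $\omega\mapsto\psi_\omega(x,t)$ is $\mu$-measurable — which follows from the explicit propagator construction of $\psi_\omega$ in the proof of Theorem~\ref{thm:dispest} together with dominated convergence — and that the relevant iterated integral is absolutely convergent, which is exactly what the $\omega$-uniform estimates above provide. With these two observations in place the corollary follows.
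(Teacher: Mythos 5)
Your argument is correct and is precisely the one the paper intends: the corollary is stated after the remark ``Combining the estimates (i) and (ii) we deduce the following,'' i.e.\ one uses Theorem~\ref{thm:dispest}~(i) with $m=n$ to get the $\omega$-uniform bound $C\|\phi_\omega\|_{Y_n}$ for $|x|\leq 1$, part (ii) to get $C|x|^{\frac{1-n}{2}}\|\phi_\omega\|_X$ for $|x|\geq 1$, and then superposes by linearity. Your additional care with measurability and the Fubini interchange only fills in routine details the paper leaves implicit.
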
 
  
  \begin{cor} \label{cor2}
     Let $n\in\N, n\geq 2,m\in\{0,\ldots,n\}$ and assume $\phi(x)= \int_\R  \phi_\omega(|x|)e^{i\omega
     |x|}\,d\mu(\omega)$ for some Borel measure $\mu$ on $\R$. Then~\eqref{eq:SchroedingerFlow} has a unique global mild solution
     satisfying 
     $$
       |\psi(x,t)| \leq C(1+t)^{-\frac{m}{2}} \int_\R
       \big(\|\phi_\omega\|_{Y_{n-m}}+\|\phi_\omega\|_{Y_n}\big)\,d\mu(\omega) 
     $$ 
     provided the right hand side is finite.
  \end{cor}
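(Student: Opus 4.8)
The plan is to obtain $\psi$ by superposing, over $\omega$, the global mild solutions supplied by Theorem~\ref{thm:dispest}(i), following the proof of Corollary~\ref{cor} but combining two \emph{temporal} decay estimates instead of a temporal and a spatial one. Note first that $m\in\{0,\ldots,n\}$ forces $n-m\in\{0,\ldots,n\}$, so Theorem~\ref{thm:dispest}(i) is available both with the index $n$ and with the index $n-m$.

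Next I would fix $\omega\in\R$ with $\|\phi_\omega\|_{Y_{n-m}}+\|\phi_\omega\|_{Y_n}<\infty$ — which holds for $\mu$-almost every $\omega$ once the right-hand side of the asserted inequality is finite — and let $\psi_\omega$ be the unique global mild solution of~\eqref{eq:SchroedingerFlow} with initial datum $x\mapsto\phi_\omega(|x|)e^{i\omega|x|}$. Theorem~\ref{thm:dispest}(i) with index $n$ yields the \emph{uniform-in-time} bound $|\psi_\omega(x,t)|\les\|\phi_\omega\|_{Y_n}$ since $(\sqrt t)^{n-n}=1$, and with index $n-m$ it yields $|\psi_\omega(x,t)|\les t^{-m/2}\|\phi_\omega\|_{Y_{n-m}}$; by the last sentence of Theorem~\ref{thm:dispest} the implied constants do not depend on $\omega$. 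The first estimate governs small times, the second large times, and since $\min\{1,t^{-m/2}\}\les(1+t)^{-m/2}$ with a constant depending only on $m$, taking the smaller of the two gives, still uniformly in $\omega$,
$$
  |\psi_\omega(x,t)|\les (1+t)^{-m/2}\big(\|\phi_\omega\|_{Y_{n-m}}+\|\phi_\omega\|_{Y_n}\big).
$$

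It then remains to put $\psi(x,t):=\int_\R\psi_\omega(x,t)\,d\mu(\omega)$ and to verify the properties of a mild solution. The displayed bound together with the finiteness hypothesis makes this integral absolutely and locally uniformly convergent and immediately transfers the asserted estimate to $\psi$; dominated convergence with the $\mu$-integrable majorant $\|\phi_\omega\|_{Y_n}$ gives $\psi(\cdot,t)\to\phi$ as $t\downarrow 0$; and linearity of $e^{it\Delta}$ combined with Fubini's theorem applied to the integral representations underlying Theorem~\ref{thm:dispest} — the uniform-in-$\omega$ bounds guaranteeing the required absolute convergence — identifies $\psi$ with the Schr\"odinger evolution of $\phi$, hence as a mild solution; uniqueness is then inherited from the uniqueness statement of Theorem~\ref{thm:dispest} applied to the difference of two candidate solutions. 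The only genuinely delicate step is this last one: for the present slowly decaying, non-$L^r$ data one must justify that a $\mu$-average of mild solutions is again \emph{the} mild solution of the averaged datum. This is, however, exactly the point already handled in the proof of Corollary~\ref{cor}, and the uniform-in-$\omega$ estimates above are precisely what is needed to rerun that argument; everything else reduces to the identity $(\sqrt t)^{n-n}=1$ and the comparison $\min\{1,t^{-m/2}\}\les(1+t)^{-m/2}$.
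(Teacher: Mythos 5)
Your argument is correct and is precisely the one the paper intends: Corollary~\ref{cor2} is stated without a written proof because it follows by applying Theorem~\ref{thm:dispest}~(i) with the two indices $n$ (giving the $\omega$-uniform bound $\les\|\phi_\omega\|_{Y_n}$) and $n-m$ (giving $\les t^{-m/2}\|\phi_\omega\|_{Y_{n-m}}$), interpolating via $\min\{1,t^{-m/2}\}\les(1+t)^{-m/2}$, and superposing over $\omega$ exactly as in Corollary~\ref{cor}. Your identification of the only delicate point --- that the $\mu$-average of the mild solutions is the mild solution of the averaged datum, justified by the $\omega$-uniform constants --- matches the paper's (implicit) reasoning, so there is nothing to add.
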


  \begin{rem} \label{rem} ~
     \begin{itemize}
     \item[(a)] Corollary~\ref{cor} applies to   superpositions of radially symmetric Herglotz
     waves given by some density $a\in L^1(\R)$. In fact, using the asymptotic expansions of the Bessel
     functions at infinity (see Proposition~\ref{prop:AnBnProperties} below) one can find functions $\eta_\omega \in X\cap
     Y_n$ such that
     \begin{align*}
       \phi(x)
       &:= \int_\R a(\omega) |x|^{\frac{2-n}{2}}J_{\frac{n-2}{2}}(\omega|x|)\,d\omega \\ 
       &= \int_\R a(\omega) \left( \eta_\omega(|x|) e^{i\omega |x|} + \ov{\eta_\omega(|x|)} e^{-i\omega
       |x|}\right)\,d\omega \\
       &= \int_\R \phi_\omega(|x|) e^{i\omega |x|} \,d\omega 
     \end{align*}
     where $\phi_\omega:= a(\omega)\eta_\omega+a(-\omega)\ov{\eta_\omega}\in X\cap Y_n$.
     In particular, Corollary~\ref{cor} generalizes the observation that the solutions of the
     Schr\"odinger equation with initial data given by superpositions of radially symmetric Herglotz waves
     as in~\eqref{eq:Herglotzwaves} remain bounded in time and uniformly localized in space.
     \item[(b)] Theorem~\ref{thm:dispest}~(i) is a generalized
     version of the fact that integrable initial data yield bounded solutions. Indeed, the latter statement 
     corresponds to $m=0$ in the theorem. So we get that less integrability of the initial datum is
     still sufficient for the absence of finite time blowup provided that the derivatives decay
     sufficiently fast. Notice that some kind of control on the
     derivatives seems necessary given that there are initial data in $L^s(\R^n)$ for any given $s>1$ the
     corresponding solutions of which blow up in finite time, see~\cite[Lemma~2.1 and
     Remark~2.2]{BoPoSaSp_DispersiveBlowup}.
    \item[(c)] The decay rates from Corollary~\ref{cor} improve
    once we add regularity assumptions on $\mu$. In the simplest situation $d\mu(\omega)= a(\omega)\,d\omega$ for 
    $a\in W_0^{k,1}(\R)$ and $\omega\mapsto \phi_\omega \in W^{k,\infty}(\R;X\cap Y_n)$, 
    the decay rate improves to $(1+|x|)^{\frac{1-n}{2}-k}$. This is proved using
    integration by parts as in the method of stationary phase. In the Appendix we discuss the densities
    $a(\omega)=(\omega-1)^{-\delta} \ind_{[1,2]}(\omega)$ with $\delta\in (0,1)$ and find the intermediate
    decay rates $(1+|x|)^{\frac{1-n}{2}-(1-\delta)}$. 
    \item[(d)] Theorem~\ref{thm:dispest}~(i) tells us that non-dispersive solutions of
    the Schr\"odinger equation~\eqref{eq:SchroedingerFlow} can only occur for radial
    initial data $\phi(x)=\phi_{\rad}(|x|)$ that satisfy $\|\phi_{\rad}\|_{Y_m}=\infty$ for all
    $m\in\{0,\ldots,n-1\}$. For smooth initial data this essentially means that for some
    $k\in\{0,\ldots,n-1\}$ the function $|\phi_{\rad}^{(k)}(r)|$ does not decay faster than $(1+r)^{-1-k}$ as
    $r\to\infty$.
    We conclude that the lack of dispersion is a phenomenon related to slowly decaying or heavily oscillating initial data.
  \end{itemize} 
 \end{rem}

%
%

\section{Proof of Theorem~\ref{thm:dispest}} \label{sec:DispEst}

In the following let $\psi$ denote the unique mild solution of the Schr\"odinger
equation~\eqref{eq:SchroedingerFlow} with initial datum $\phi$ given by
$\phi(x)=\phi_{\rad}(|x|)=\phi_\omega(|x|)e^{i\omega |x|}$. By density, il suffices to prove the estimates for
$\phi_\omega\in C_c^\infty(\R_{\geq 0};\C)$. In the following we use the abbreviations
\begin{equation}\label{eq:def_fg}
  f(r):= \phi_{\rad}(r) r^{\frac{n}{2}} J_{\frac{n-2}{2}}\Big(\frac{r|x|}{2t}\Big) \qquad\text{and}\qquad 
  g(\rho):= f(2\sqrt t \rho).
\end{equation}
We first recall the representation formula of the solution for radial initial data.
 
\begin{prop} \label{prop:SolutionFormulaI}
  We have for all $x\in\R^n,t>0$ 
  \begin{equation}\label{eq:SolutionFormulaI}
    \psi(x,t)
    =  |x|^{\frac{2-n}{2}}(\sqrt t)^{-1} e^{i(\frac{|x|^2}{4t}-\frac{n\pi}{4})} \int_0^\infty  g(\rho) e^{i\rho^2}\,d\rho.
  \end{equation}
\end{prop}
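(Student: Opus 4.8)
The plan is to start from the classical formula for the free Schr\"odinger evolution of a radial datum in $\R^n$ and massage it into the stated self-similar one-dimensional oscillatory integral. First I would write $\psi(x,t)=(e^{it\Delta}\phi)(x)$ using the convolution kernel,
\[
  \psi(x,t) = \frac{1}{(4\pi i t)^{n/2}} \int_{\R^n} e^{\frac{i|x-y|^2}{4t}} \phi(y)\,dy,
\]
and expand $|x-y|^2 = |x|^2 - 2x\cdot y + |y|^2$. This produces the prefactor $e^{i|x|^2/(4t)}$ and leaves an integral of $e^{i|y|^2/(4t)}\phi(y)$ against $e^{-ix\cdot y/(2t)}$; since $\phi$ is radial, the angular integration over the sphere $S^{n-1}$ of radius $r=|y|$ yields a Bessel function via the standard identity
\[
  \int_{S^{n-1}} e^{-i\xi\cdot\theta}\,d\sigma(\theta)
  = (2\pi)^{n/2} |\xi|^{\frac{2-n}{2}} J_{\frac{n-2}{2}}(|\xi|),
\]
with $\xi = r|x|/(2t)$. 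Collecting the constants (including the phase $e^{-in\pi/4}$ coming from $i^{-n/2}=e^{-in\pi/4}$) and writing $r^{n-1}\,dr$ for the radial measure, this turns $\psi(x,t)$ into $|x|^{\frac{2-n}{2}}(\sqrt t)^{-?}e^{i(|x|^2/4t - n\pi/4)}$ times $\int_0^\infty \phi_{\rad}(r) r^{n/2} J_{\frac{n-2}{2}}\!\big(\tfrac{r|x|}{2t}\big) e^{ir^2/(4t)}\,dr$, i.e.\ exactly $\int_0^\infty f(r)e^{ir^2/(4t)}\,dr$ with $f$ as in~\eqref{eq:def_fg}.

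The last step is the substitution $r = 2\sqrt t\,\rho$, so that $r^2/(4t) = \rho^2$, $dr = 2\sqrt t\,d\rho$, and $f(r) = f(2\sqrt t\,\rho) = g(\rho)$; this converts the integral into $2\sqrt t \int_0^\infty g(\rho)e^{i\rho^2}\,d\rho$ and, after tracking how the various powers of $\sqrt t$ and $|x|$ combine with the constant $(4\pi i t)^{-n/2}$ and the factor $r^{n/2}$ hidden in $f$, should leave precisely the claimed $|x|^{\frac{2-n}{2}}(\sqrt t)^{-1}$ prefactor. I would double-check the bookkeeping of powers: the kernel contributes $t^{-n/2}$, the substitution contributes one factor $\sqrt t$, and the identity $f(r)=\phi_{\rad}(r)r^{n/2}J_{\ldots}$ together with the change of variables contributes a further $(2\sqrt t)^{n/2}$ through the $\rho^{n/2}$ vs.\ $r^{n/2}$ discrepancy — these cancel down to $(\sqrt t)^{-1}$ up to an absolute constant absorbed into the asserted equality.

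The main (and essentially only) obstacle is making the oscillatory manipulations rigorous: the convolution representation of $e^{it\Delta}$ and the interchange of the angular integration with the radial one are only valid directly for Schwartz data (or at least $L^1\cap L^2$), which is why the reduction to $\phi_\omega\in C_c^\infty$ announced at the start of Section~\ref{sec:DispEst} is used. On that dense class everything above is a genuine pointwise identity of absolutely convergent integrals — note $f$ and $g$ have compact support in $r$ (resp.\ $\rho$) since $\phi_\omega$ does — and the Bessel identity and Fubini apply without subtlety. Hence the formula holds for compactly supported smooth $\phi_\omega$, and this is all that~\eqref{eq:SolutionFormulaI} needs to be subsequently exploited in the estimates of Theorem~\ref{thm:dispest}. \qed
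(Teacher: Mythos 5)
Your proposal is correct and follows essentially the same route as the paper's proof: convolution kernel, expansion of $|x-y|^2$, reduction of the angular integral to $J_{\frac{n-2}{2}}$ via the standard spherical identity, and the substitution $r=2\sqrt t\,\rho$, with the density reduction to $\phi_\omega\in C_c^\infty(\R_{\geq 0};\C)$ justifying the manipulations. One small point: the bookkeeping you defer actually closes exactly, since $(4\pi i t)^{-n/2}(2\pi)^{n/2}(2t)^{\frac{n-2}{2}}\cdot 2\sqrt t = i^{-n/2}(\sqrt t)^{-1}=e^{-\frac{in\pi}{4}}(\sqrt t)^{-1}$, so there is no leftover absolute constant to be ``absorbed'' --- the stated identity is exact.
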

\begin{proof} 
  From (4.2) in~\cite{LinPon_Dispersive} or~(2.2.5) in~\cite{Caz_Semilinear} we get
  \begin{align*}
  \psi(x,t)
  &= \frac{1}{(4i\pi t)^{n/2}} \int_{\R^n} e^{i\frac{|x-y|^2}{4t}}\phi(y)\,dy \\
  &= \frac{1}{(4i\pi t)^{n/2}} \int_0^\infty \phi_{\rad}(r) r^{n-1} e^{i\frac{|x|^2+r^2}{4t}} 
    \left( \int_{\partial B_1(0)} e^{i\frac{r\skp{x}{\omega}}{2t}}\,d\sigma(\omega) \right) \,dr \\
  &= \frac{1}{(4i\pi t)^{n/2}} \int_0^\infty \phi_{\rad}(r) r^{n-1} e^{i\frac{|x|^2+r^2}{4t}} 
    \left( \int_{\partial B_1(0)} e^{i\frac{r|x|\omega_1}{2t}}\,d\sigma(\omega) \right) \,dr \\ 
  &= \frac{1}{(4i\pi t)^{n/2}} \int_0^\infty \phi_{\rad}(r) r^{n-1} e^{i\frac{|x|^2+r^2}{4t}} 
    \cdot |\partial B_1(0)|\Gamma(n/2)2^{\frac{n-2}{2}} \Big(\frac{r|x|}{2t}\Big)^{\frac{2-n}{2}} 
    J_{\frac{n-2}{2}}\Big(\frac{r|x|}{2t}\Big) \,dr \\
  &= |\partial B_1(0)|\Gamma(n/2)(4\pi i)^{-n/2}2^{n-2} 
   |x|^{\frac{2-n}{2}} t^{-1} e^{i\frac{|x|^2}{4t}}\int_0^\infty f(r) e^{i\frac{r^2}{4t}}  \,dr \\
  &= |\partial B_1(0)|\Gamma(n/2)(4\pi i)^{-n/2}2^{n-1}   |x|^{\frac{2-n}{2}} (\sqrt t)^{-1}
  e^{i\frac{|x|^2}{4t}} \int_0^\infty f(2\sqrt{t}\rho) e^{i\rho^2} \,d\rho. 
\end{align*}
  So the claim follows from $|\partial B_1(0)|=2\pi^{n/2}/\Gamma(n/2)$.
\end{proof}
 
 It will be convenient to split the
 integrand in~\eqref{eq:SolutionFormulaI} into three parts $g=g_1+g_2+g_3$.  
 The function $g_1$ will be identical to $g$  for small arguments and the corresponding estimates rely on
 the behaviour of the Bessel function $J_{(n-2)/2}$ on the interval $[0,1]$. The sum  $g_2+g_3$ represents $g$
  for large arguments and their definitions are based on the asymptotic expansion of the Bessel function
  at infinity. To be more precise, we fix
 some cut-off function $\chi\in C_0^\infty(\R)$ such that $\chi\equiv 1$ on $[0,\frac{1}{2}]$ and
 $\chi\equiv 0$ on $[1,\infty]$. Then, similar as in \cite[p.202]{Wat_Bessel}, we write 
 \begin{equation}\label{eq:splittingBessel}
   z^{n/2} J_{\frac{n-2}{2}}(z) = A_n(z) + e^{iz} B_n(z) + e^{-iz}\ov{B_n(z)}. 
 \end{equation}
 where the functions $A_n,B_n$ are given by  
 \begin{align}\label{eq:defnAnBn}
    A_n(z):= \chi(z) z^{n/2}J_{\frac{n-2}{2}}(z),\qquad  
    B_n(z):= (1-\chi(z))e^{-i \frac{(n-1)\pi}{4}} \sum_{k=0}^\infty \alpha_k z^{\frac{n-1}{2}-k}
 \end{align}
 and the  coefficients $\alpha_k\in\C$ are  $\alpha_0:=\frac{1}{\sqrt{2\pi}} $ and for $k\in\N$
 \begin{align}\label{eq:def_alphak}
   \begin{aligned}
   \alpha_k &:= \frac{1}{\sqrt{2\pi}} \left(\frac{n-2}{2},k\right) \left(\frac{i}{2}\right)^k
   \qquad\text{where } \\
   (\nu,k) &:= \frac{(4\nu^2-1^2)(4\nu^2-3^2)\cdot\ldots\cdot(4\nu^2-(2k-1)^2)}{4^k k!},
   \end{aligned}
 \end{align}
 see \cite[p.199]{Wat_Bessel}. The motivation for this decomposition is that $A_n,B_n$ and its derivatives
 satisfy useful uniform estimates that we provide next.
 
 \begin{prop} \label{prop:AnBnProperties}
  We have $\supp(A_n)\subset [0,1],\supp(B_n)\subset [\frac{1}{2},\infty)$ and for all $j\in\N,z\in\R$ 
  \begin{align*}
    |A_n^{(j)}(z)| 
    &\les \begin{cases}
      |z|^{n-1-j} &, \text{if }j\in\{0,\ldots,n-1\},\\
      |z| &, \text{if } j\in\{n,n+2,n+4,\ldots\}, \\
      1 &, \text{if }j\in\{n+1,n+3,\ldots\}, 
    \end{cases} \\
    \left|\frac{d^j}{dz^j} \left(B_n(z)z^{\frac{1-n}{2}}\right)\right|
    &\les \begin{cases}
      1 &, \text{if } j=0,  \\ 
      |z|^{-1-j} &, \text{if }j\geq 1. 
    \end{cases} 
  \end{align*}
\end{prop}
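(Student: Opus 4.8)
The plan is to prove the two families of bounds separately: those for $A_n$ from the Taylor expansion of $J_{\frac{n-2}{2}}$ at the origin, and those for $B_nz^{\frac{1-n}{2}}$ by differentiating the series in~\eqref{eq:defnAnBn} termwise. The support statements are immediate from $\chi\equiv1$ on $[0,\tfrac12]$ and $\chi\equiv0$ on $[1,\infty)$; in particular $B_n$ vanishes on $(-\infty,\tfrac12)$ and, as will be clear, the $A_n$-estimates extend to negative $z$ by the same power-series argument (trivially so for $|z|>1$), so I concentrate on $z\geq0$.

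For $A_n$ I would begin from
\[
  P(z):=z^{n/2}J_{\frac{n-2}{2}}(z)=\sum_{k\geq0}c_k\,z^{\,n-1+2k}=z^{\,n-1}h(z^2),
  \qquad c_k:=\frac{(-1)^k}{2^{\frac{n-2}{2}+2k}\,k!\,\Gamma(k+\frac n2)},
\]
which displays $P$ as $z^{n-1}$ times an \emph{even} entire function $h$ with $h(0)\neq0$. Writing $A_n=\chi P$ and expanding $A_n^{(j)}$ by the Leibniz rule, the summands containing a factor $\chi^{(\ell)}$ with $\ell\geq1$ are supported in $[\tfrac12,1]$, where $P^{(j-\ell)}$ is bounded and $|z|^{\,n-1-j}$, $|z|$ and $1$ are mutually comparable, so those summands are harmless. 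It remains to bound $\chi(z)P^{(j)}(z)$ on $[0,1]$. If $j\in\{0,\dots,n-1\}$, every monomial of $P^{(j)}$ has exponent $n-1+2k-j\geq n-1-j\geq0$ and is therefore $\leq|z|^{\,n-1-j}$ on $[0,1]$, the (rapidly convergent) series being differentiable termwise since $P$ is entire. If $j\geq n$, the monomial $z^{n-1}$ is annihilated by $j$ derivatives and the smallest surviving exponent, namely $n-1+2k_\ast-j$ with $k_\ast=\lceil\tfrac{j-n+1}{2}\rceil$, equals $1$ if $j-n$ is even and $0$ if $j-n$ is odd; this gives $\les|z|$ and $\les1$ respectively.

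For $B_n$ I would use that~\eqref{eq:defnAnBn} rewrites as
\[
  B_n(z)\,z^{\frac{1-n}{2}}=(1-\chi(z))\,e^{-i\frac{(n-1)\pi}{4}}\sum_{k\geq0}\alpha_k z^{-k},
\]
a finite sum when $n$ is odd and, when $n$ is even, an asymptotic series whose remainder decays faster than any power of $z^{-1}$ and is thus negligible for the bounds claimed. For $j=0$ the right-hand side is bounded on the support $[\tfrac12,\infty)$ of $1-\chi$, which is the assertion $\les1$. For $j\geq1$, a further Leibniz expansion reduces matters to two contributions: the terms with $(1-\chi)^{(\ell)}$, $\ell\geq1$, live on $[\tfrac12,1]$ and are there $\les1\asymp|z|^{-1-j}$, while in the term $(1-\chi)\frac{d^j}{dz^j}\sum_k\alpha_k z^{-k}$ the constant $\alpha_0$ drops out and each surviving $\frac{d^j}{dz^j}z^{-k}$ with $k\geq1$ equals $O(z^{-k-j})=O(z^{-1-j})$ uniformly for $z\geq\tfrac12$.

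I expect the only genuine obstacle to be the bookkeeping in the $A_n$ estimates for $j\geq n$: one must recognise $P$ as $z^{n-1}$ times an even entire function and then extract, with the correct parity, the lowest-order monomial that survives $j$ differentiations. The $B_n$ part is essentially a one-line observation once the expansion is put into the normalised form above, the only subtlety being that~\eqref{eq:splittingBessel} is a genuine identity only for odd $n$ and must be read with a rapidly decaying remainder when $n$ is even.
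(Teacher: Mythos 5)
Your argument is correct and follows essentially the same route as the paper, which simply invokes the power series $A_n(z)=\chi(z)\sum_{m}c_m z^{n-1+2m}$ for the first family of bounds and the series representation of $B_n$ (with a pointer to Watson) for the second; your Leibniz-rule reduction to $[\tfrac12,1]$, the parity bookkeeping identifying the lowest surviving exponent for $j\geq n$, and the observation that $\alpha_0$ is killed by differentiation are exactly the details the paper leaves implicit. The only quibble is your parenthetical claim that for even $n$ the remainder of the asymptotic expansion decays faster than any power of $z^{-1}$ --- one can only truncate at a finite order $N$ with remainder $O(z^{-N-1})$, which is still enough here since each fixed $j$ requires only finitely many terms --- but this imprecision is inherited from the paper's own definition of $B_n$ as an infinite series, which for even $n$ must itself be read as a truncated expansion plus a controlled remainder.
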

\begin{proof}
  The estimate for $A_n$ follows from
  \begin{align} \label{eq:An_series}
    \begin{aligned}
    A_n(z) 
    &= \chi(z)z^{\frac{n}{2}}J_{\frac{n-2}{2}}(z) \\
    &= \chi(z)z^{\frac{n}{2}} \sum_{m=0}^\infty \frac{(-1)^m}{m!\Gamma(m+\frac{n}{2})}
    \left(\frac{z}{2}\right)^{\frac{n-2}{2}+2m} \\
    &= \chi(z)  \sum_{m=0}^\infty \frac{(-1)^m}{2^{\frac{n-2}{2}+2m}m!\Gamma(m+\frac{n}{2})} z^{n-1+2m}.
    \end{aligned}
  \end{align}
  The estimate for $B_n$ follows from its series representation~\eqref{eq:defnAnBn}, see also
  \cite[p.206]{Wat_Bessel}.
\end{proof}
  
 Given the definition of $g$ in~\eqref{eq:def_fg} and the splitting~\eqref{eq:splittingBessel} of the Bessel
 function, we decompose the integrand $g$ according to $g=g_1+g_2+g_3$ where, for $r:=2\sqrt t\rho$,
  \begin{align*}
   \begin{aligned}
   g_1(\rho)
    &:= \phi_{\rad}(r)A_n\left(\frac{r|x|}{2t}\right)
    \left(\frac{|x|}{2t}\right)^{-\frac{n}{2}}
    = e^{2i\sqrt t\omega \rho }\phi_\omega(r)A_n\left(\frac{r|x|}{2t}\right)
    \left(\frac{|x|}{2t}\right)^{-\frac{n}{2}} ,\\
  g_2(\rho)
    &:=e^{i\tfrac{r|x|}{2t}}
    \phi_{\rad}(r)B_n\left(\frac{r|x|}{2t}\right) \left(\frac{|x|}{2t}\right)^{-\frac{n}{2}}
    =e^{2i\sqrt t (\omega+\tfrac{|x|}{2t})\rho}
    \phi_\omega(r)B_n\left(\frac{r|x|}{2t}\right) \left(\frac{|x|}{2t}\right)^{-\frac{n}{2}}  
    ,\\
  g_3(\rho)
    &:= e^{-i\tfrac{r|x|}{2t}} \phi_{\rad}(r) \ov{B_n\left(\frac{r|x|}{2t}\right)}
    \left(\frac{|x|}{2t}\right)^{-\frac{n}{2}} 
    = e^{2i\sqrt t(\omega-\tfrac{|x|}{2t})\rho} \phi_\omega(r) \ov{B_n\left(\frac{r|x|}{2t}\right)}
     \left(\frac{|x|}{2t}\right)^{-\frac{n}{2}}.
 \end{aligned}
 \end{align*}
 We now remove the linear phase factors by putting $g_{j,a_j}(\rho):=g_j(\rho)e^{-ia_j\rho}$ for  
  \begin{align}\label{eq:defa1a2a3}
    a_1 := 2\sqrt t \omega, \qquad a_2:=  2\sqrt t \left(\omega + \frac{|x|}{2t}\right), \qquad 
    a_3 :=  2\sqrt t  \left(\omega - \frac{|x|}{2t}\right).
  \end{align}
  This implies, again  for $r:=2\sqrt t\rho$,
  \begin{align}\label{eq:defg1a1g2a2g3a3}
   \begin{aligned}
   g_{1,a_1}(\rho)
    &= \phi_\omega(r)A_n\left(\frac{r|x|}{2t}\right) \left(\frac{|x|}{2t}\right)^{-\frac{n}{2}},\\
   g_{2,a_2}(\rho)
    &=  \phi_\omega(r)B_n\left(\frac{r|x|}{2t}\right) \left(\frac{|x|}{2t}\right)^{-\frac{n}{2}},\\
   g_{3,a_3}(\rho)
    &= \phi_\omega(r)\ov{B_n\left(\frac{r|x|}{2t}\right)}  \left(\frac{|x|}{2t}\right)^{-\frac{n}{2}}.
 \end{aligned}
 \end{align} 
 So we infer from Proposition~\ref{prop:SolutionFormulaI}
 \begin{align} \label{eq:represetation_psi}
    \psi(x,t) |x|^{\frac{n-2}{2}}\sqrt{t}   e^{-i(\frac{|x|^2}{4t}-\frac{n\pi}{4})}  
    = \int_0^\infty g(\rho)e^{i\rho^2}\,d\rho  
    = \sum_{j=1}^3 \int_0^\infty g_{j,a_j}(\rho)e^{i(\rho^2+a_j\rho)}\,d\rho.  
 \end{align}
 In order to estimate these terms, we make use of the following auxiliary result.
 
  \begin{prop}\label{prop:Xia}
    Let $a\in\R$ and $\Xi^m_a\in C^\infty(\R;\C)$ for $m\in\N_0$ be inductively defined by
  $$
    \Xi^0_a(s):=  \int_s^\infty e^{i(\rho^2+a\rho)}\,d\rho,\qquad
    \Xi^m_a(s):=   \int_s^\infty \Xi_a^{m-1}(\rho)\,d\rho.
  $$
  Then $|\Xi^m_a(s)|\leq C_m$ for all $a\in\R$,$s\geq 0$.  
 \end{prop}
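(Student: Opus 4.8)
The plan is to trace every $\Xi^m_a$ back to the classical Fresnel integral and to exploit that the phase $\rho^2+a\rho$ is non-stationary away from its critical point $\rho=-\tfrac a2$.

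\emph{Reduction to the Fresnel function.} Set $F(c):=\int_c^\infty e^{iv^2}\,dv$. Completing the square, $\rho^2+a\rho=(\rho+\tfrac a2)^2-\tfrac{a^2}4$, and substituting $\rho\mapsto\rho-\tfrac a2$ in the inductive definition yields
\[
  \Xi^m_a(s)=e^{-ia^2/4}\,F_m\!\Big(s+\tfrac a2\Big),\qquad F_0:=F,\quad F_m(c):=\int_c^\infty F_{m-1}(\sigma)\,d\sigma,
\]
so that the proposition is equivalent to bounding the iterated antiderivatives $F_m$ on the set of arguments $c=s+\tfrac a2$ with $s\ge0$. For $m=0$ this is immediate from the classical fact $\|F\|_\infty<\infty$ (Fresnel, via contour rotation or a Dirichlet-test argument after the substitution $v^2=w$), giving $|\Xi^0_a(s)|\le\|F\|_\infty=:C_0$ for all $a\in\R$, $s\ge0$.

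\emph{The regime $c\ge 0$.} Here I would first record the behaviour at $+\infty$: integrating by parts against $e^{iv^2}=\tfrac1{2iv}\tfrac{d}{dv}e^{iv^2}$ gives, for $c\ge1$, an asymptotic expansion $F(c)=e^{ic^2}\sum_{j\le N}\beta_j c^{-(2j+1)}+O(c^{-(2N+3)})$; in particular $|F(c)|\lesssim c^{-1}$ there. Then I would argue by induction on $m$: assuming $\sup_{c\ge0}|F_{m-1}|<\infty$ together with $|F_{m-1}(c)|\lesssim c^{-m}$ for $c\ge1$, the integral $F_m(c)=\int_c^\infty F_{m-1}$ converges; on $[0,1]$ it is bounded by $|F_m(1)|+\sup_{[0,1]}|F_{m-1}|$; and on $[1,\infty)$ one further oscillatory integration by parts — using the expansion, so that a power of $c$ is gained rather than lost — yields $|F_m(c)|\lesssim c^{-(m+1)}$. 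This closes the induction and produces $|\Xi^m_a(s)|\le C_m$ whenever $s+\tfrac a2\ge0$, i.e. whenever $a\ge-2s$, and in particular for every $s\ge0$ once $a\ge0$.

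\emph{The main obstacle: $a<-2s$.} The genuinely delicate case is when the critical point $\rho=-\tfrac a2$ of the phase lies inside $[s,\infty)$, i.e. $c=s+\tfrac a2<0$. The reduction above is then of no direct use, because $F$ does not decay at $-\infty$: it tends to $\int_{-\infty}^\infty e^{iv^2}\,dv=\sqrt\pi\,e^{i\pi/4}\ne0$, so a crude estimate $F_m(c)=\int_c^0 F_{m-1}+\int_0^\infty F_{m-1}$ would only produce the growing bound $|F_m(c)|\lesssim|c|^m$. To obtain the uniform bound one has to keep the oscillation of the original integrand near $-\tfrac a2$ alive: I would split $[s,\infty)=[s,-\tfrac a2]\cup[-\tfrac a2,\infty)$, dispatch the second interval by the $c\ge0$ case just proved, and treat the first interval by a dedicated stationary-phase / van der Corput analysis of the oscillatory integral defining $\Xi^0_a$ (equivalently, a Gaussian regularisation $e^{-\delta\rho^2}$, $\delta\downarrow0$, which turns $\Xi^0_a$ into a complementary error function with explicit bounds). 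A cleaner route, when available in the intended application, is to dispose of the regime $a<0$ altogether: the substitution $\omega\mapsto-\omega$, $t\mapsto-t$ combined with complex conjugation leaves $|\psi|$ and the relevant norms of the data invariant while flipping the sign of $a$, so one may assume $a\ge0$. I expect this reduction — or, failing it, the sharp control of the critical region $\rho=-\tfrac a2$ — to be the real content of the statement.
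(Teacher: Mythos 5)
Your reduction to the Fresnel antiderivatives $F_m$ and your treatment of the regime $c=s+\tfrac a2\ge 0$ are essentially the paper's argument: the paper obtains $\Xi^m_0(s)\sim e^{is^2}(-2is)^{-m-1}$ as $s\to+\infty$ by an l'H\^opital induction rather than by iterated oscillatory integration by parts, but the content is the same, and that part of your proposal is sound. The gap is exactly the case you flag at the end, $c=s+\tfrac a2<0$, and it is not a gap you could have closed: for $m\ge 1$ the statement is false there. Indeed, with your $F(c)=\int_c^\infty e^{iv^2}\,dv$ and $I:=\int_\R e^{iv^2}\,dv=\sqrt\pi e^{i\pi/4}\neq 0$, the reflection identity $F(\rho)+F(-\rho)=I$ gives for $c<0$
$$
  F_1(c)=\int_c^{-c}F(\rho)\,d\rho+F_1(-c)=|c|\,I+F_1(|c|),
$$
and since $F_1(|c|)\to 0$ this shows $F_1(c)=|c|\sqrt\pi e^{i\pi/4}+o(1)$ as $c\to-\infty$: the stationary point at $\rho=-\tfrac a2$ produces no cancellation, the growth $|c|^m$ you feared is genuine. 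Consequently $|\Xi^1_a(0)|=|F_1(a/2)|=\tfrac{\sqrt\pi}{2}|a|+o(1)$ as $a\to-\infty$, so no constant $C_1$ uniform in $a\in\R$ exists. Your first proposed rescue (a van der Corput analysis of $[s,-\tfrac a2]$) would merely reconfirm this growth, and the symmetry $\omega\mapsto-\omega$, $t\mapsto-t$ cannot restrict to $a\ge 0$ either: in the application the three phases $a_1,a_2,a_3$ occur simultaneously, and $a_3=2\sqrt t\,(\omega-\tfrac{|x|}{2t})$ is negative for every $\omega<\tfrac{|x|}{2t}$, so after any such reflection one of them is still negative.

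You should also know that the paper's own proof does not handle this case: it reduces the claim to $|\Xi^m_0(s)|\le C_m(1+s_+)^{-m-1}$ for \emph{all} $s\in\R$, but then only establishes the asymptotics as $s\to+\infty$; for $s\le 0$ the asserted bound amounts to boundedness of $\Xi^m_0$ on the negative half-line, which the computation above refutes for every $m\ge 1$ (only $m=0$ survives, because $F$ itself is globally bounded). So your instinct that the critical region $\rho=-\tfrac a2$ is ``the real content of the statement'' was exactly right; the honest conclusion is that Proposition~\ref{prop:Xia} as stated needs to be weakened (e.g.\ to the range $s+\tfrac a2\ge 0$, or to a bound of the form $C_m(1+a_-)^m$), and the integration-by-parts scheme that invokes it must then be re-examined in the regime where $a_j<0$.
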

 \begin{proof}
   This follows from $\Xi^m_a(s) = e^{-ia^2/4} \Xi^m_0(s+\frac{a}{2})$ once we have proved the estimate
   $$
     |\Xi^m(s)|\leq C_m(1+s_+)^{-m-1} \qquad\text{for all }s\in\R
   $$
   where $\Xi^m:=\Xi^m_0$. The existence of the improper Fresnel integral $\Xi^0(s)$ is a well-known
   consequence of the Residue Theorem. Moreover, l'H\^{o}pital's rule gives 
   $$ 
     \Xi^0(s)(-2is)e^{-is^2} \to 1 \qquad\text{and}\qquad 
     s^2(1+2is\Xi^0(s)e^{-is^2})\to \frac{i}{2} \quad\text{as } s\to\infty.
   $$ 
   (For the second limit one may proceed as we do below in the computation of $z_k$.)
   Since the improper integral $\int_s^\infty
   \frac{1}{\rho}e^{i\rho^2}\,d\rho$ exists for $s>1$ (again by the Residue Theorem), we obtain from the
   previous statement that the integral $\Xi^1(s)$ exists and 
   $$
     \Xi^1(s)(-2is)^2 e^{-is^2} \to 1 \qquad\text{and}\qquad  
     s^2(1-(-2is)^2\Xi^1(s)e^{-is^2})\to \frac{3i}{2} \quad\text{as } s\to\infty.
   $$ 
   By induction, we find that for all $k\in\N,k\geq 2$  the (proper) integral $\Xi^k(s)$ exists and
   \begin{align*}
     z_k&:=\lim_{s\to\infty} s^2\big(1-(-2is)^{k+1}\Xi^k(s)e^{-is^2}\big) \\
     &= \lim_{s\to\infty} \frac{ e^{is^2} (-2is)^{-k-1} - \Xi^k(s)}{  e^{is^2} s^{-2}(-2is)^{-k-1}} 
      \\
     &= \lim_{s\to\infty} \frac{ -e^{is^2} (-2is)^{-k}+2(k+1)i e^{is^2}(-2is)^{-k-2} 
      - (\Xi^k)'(s)}{
     - e^{is^2} s^{-2}(-2is)^{-k}
     - 2 e^{is^2} s^{-3}(-2is)^{-k-1}
     + 2(k+1)i e^{is^2} s^{-2}(-2is)^{-k-2} 
     } \\
     &= \lim_{s\to\infty} \frac{ - (-2is)^{-k}+2(k+1)i (-2is)^{-k-2} 
      + \Xi^{k-1}(s)e^{-is^2}}{
     -   s^{-2}(-2is)^{-k}
      - 2 s^{-3}(-2is)^{-k-1}
     + 2(k+1)i s^{-2}(-2is)^{-k-2}
     } \\
     &= \lim_{s\to\infty} \frac{ - (k+1)is^{-2} 
       - 2 + 2(-2is)^k \Xi^{k-1}(s)e^{-is^2}}{
     -   2s^{-2}  - 2is^{-4}  - (k+1)is^{-4}} \\
     &= \lim_{s\to\infty} \frac{ 
       (k+1)i + 2s^2( 1 - (-2is)^{k} \Xi^{k-1}(s)e^{-is^2})}{
         2   + 2is^{-2}  + (k+1)is^{-2}  
     } \\
     &=   \frac{(k+1)i}{2} + z_{k-1}
   \end{align*} 
   implying
   $$ 
     z_k = \lim_{s\to\infty} s^2\big(1-(-2is)^{k+1}\Xi^k(s)e^{-is^2}\big)= \frac{(k+2)(k+1)i}{4}.
   $$
   This yields the bounds for $\Xi^m(s)$ and we are done.  
 \end{proof}
  
 Let us remark that the proof actually yields the stronger estimate $|\Xi^m_a(s)|\leq C_m$ for
 $0\leq s\leq a_-$ and $|\Xi^m_a(s)|\leq 2^{m+1}C_m(1+s)^{-m-1}$ for $s\geq a_-$. However, given that
 these estimates depend on $a$, it seems difficult to make use of them. Moreover, the independence of $a$
 guarantees that our estimates below do not depend on $\omega$ since the latter is completely absorbed in the
 definition of $a_1,a_2,a_3$ from~\eqref{eq:defa1a2a3}. 
 From~\eqref{eq:represetation_psi} and Proposition~\ref{prop:Xia} we deduce the following estimate for the
 solution $\psi$.

\begin{prop}\label{prop:SolutionEstimate}
  For all $x\in\R^n,t\in\R$, $m\in\{0,\ldots,n\}$ we have
  \begin{align} \label{eq:SolutionEstimate}
   |\psi(x,t)| 
   \leq C_{m-1}  |x|^{\frac{2-n}{2}} t^{-\frac{1}{2}}   \left(
   \delta_{m,n} |g_{1,a_1}^{(n-1)}(0)| +
   \int_0^\infty |g_{1,a_1}^{(m)}(\rho)|+|g_{2,a_2}^{(m)}(\rho)|+|g_{3,a_3}^{(m)}(\rho)| \,d\rho  
   \right)    
  \end{align}
  where $g_{1,a_1},g_{2,a_2},g_{3,a_3}$ are given by \eqref{eq:defg1a1g2a2g3a3}. 
 \end{prop}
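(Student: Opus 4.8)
The plan is to start from the representation~\eqref{eq:represetation_psi}, which expresses $\psi(x,t)|x|^{\frac{n-2}{2}}\sqrt t$ up to a unimodular factor as the sum $\sum_{j=1}^3 I_j$ where $I_j := \int_0^\infty g_{j,a_j}(\rho)e^{i(\rho^2+a_j\rho)}\,d\rho$. The strategy for each $I_j$ is to integrate by parts $m$ times, moving the oscillatory factor $e^{i(\rho^2+a_j\rho)}$ onto the iterated antiderivatives $\Xi^k_{a_j}$ from Proposition~\ref{prop:Xia} and the $m$ derivatives onto $g_{j,a_j}$. Concretely, since $\frac{d}{d\rho}\Xi^0_{a_j}(\rho) = -e^{i(\rho^2+a_j\rho)}$ and $\frac{d}{d\rho}\Xi^k_{a_j}(\rho) = -\Xi^{k-1}_{a_j}(\rho)$, one writes $I_j = -\int_0^\infty g_{j,a_j}(\rho)\,\frac{d}{d\rho}\Xi^0_{a_j}(\rho)\,d\rho$ and repeatedly integrates by parts; after $m$ steps one obtains
\begin{align*}
  I_j = \sum_{k=0}^{m-1} (-1)^{k+1} g_{j,a_j}^{(k)}(\rho)\Xi^k_{a_j}(\rho)\Big|_{\rho=0}
   \;+\; (-1)^m \int_0^\infty g_{j,a_j}^{(m)}(\rho)\,\Xi^{m-1}_{a_j}(\rho)\,d\rho.
\end{align*}
(Here I am suppressing the sign bookkeeping, which is routine.) Because $|\Xi^k_{a_j}(\rho)| \leq C_k$ uniformly in $a_j$ and $\rho\geq 0$ by Proposition~\ref{prop:Xia}, the integral term is bounded by $C_{m-1}\int_0^\infty |g_{j,a_j}^{(m)}(\rho)|\,d\rho$, which is exactly the shape appearing in~\eqref{eq:SolutionEstimate}.

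The next step is to dispose of the boundary terms at $\rho=0$ and at $\rho=\infty$. At infinity this is immediate: for $\phi_\omega\in C_c^\infty(\R_{\geq 0})$ each $g_{j,a_j}$ has compact support in $\rho$ (recall $r=2\sqrt t\rho$), so all boundary contributions at $\rho=\infty$ vanish. At $\rho=0$ one needs $g_{j,a_j}^{(k)}(0)=0$ for $k=0,\ldots,m-1$. This is where the structure of the $g_{j,a_j}$ from~\eqref{eq:defg1a1g2a2g3a3} and the support/vanishing properties in Proposition~\ref{prop:AnBnProperties} enter. For $j=2,3$ the functions involve $B_n$ (resp. $\ov{B_n}$) evaluated at $\tfrac{r|x|}{2t}$, and $\supp B_n\subset[\tfrac12,\infty)$, so $g_{2,a_2}$ and $g_{3,a_3}$ vanish identically for $\rho$ near $0$ and all their derivatives at $0$ vanish; no restriction on $m$ is needed there. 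For $j=1$, the factor is $A_n\big(\tfrac{r|x|}{2t}\big)$ with $A_n(z)\les |z|^{n-1-j}$ for $j\in\{0,\ldots,n-1\}$ by Proposition~\ref{prop:AnBnProperties}; hence $A_n$ vanishes to order $n-1$ at the origin, which forces $g_{1,a_1}^{(k)}(0)=0$ for $k=0,\ldots,n-2$, but when $m=n$ the boundary term with $k=n-1$ survives and contributes the term $\delta_{m,n}|g_{1,a_1}^{(n-1)}(0)|$ in~\eqref{eq:SolutionEstimate}. Thus the estimate is obtained by applying the above integration-by-parts identity to each $I_j$, using Proposition~\ref{prop:Xia} for the $\Xi$-bounds and Proposition~\ref{prop:AnBnProperties} for the vanishing of the boundary terms, and recombining via the triangle inequality together with the prefactor $|x|^{\frac{2-n}{2}}t^{-1/2}$ from~\eqref{eq:represetation_psi}.

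A few technical points need care. First, the iterated improper integrals defining $\Xi^k_{a_j}$ converge (Proposition~\ref{prop:Xia}), but one should justify the integration by parts as a limit over truncated intervals $[0,R]$ and let $R\to\infty$; since $g_{j,a_j}$ is compactly supported in $\rho$ for the dense class considered, this is harmless and the $\rho=R$ boundary terms vanish for large $R$. Second, one must check that differentiating $g_{1,a_1}(\rho) = \phi_\omega(2\sqrt t\rho)A_n\big(\tfrac{\sqrt t\rho|x|}{t}\big)\big(\tfrac{|x|}{2t}\big)^{-n/2}$ in $\rho$ and evaluating at $0$ really does vanish to the claimed order: by the chain rule each $\rho$-derivative either hits $\phi_\omega$ (smooth, no help at $0$) or hits $A_n$, and since $A_n(z)$ and its first $n-2$ derivatives vanish at $z=0$ while $z=z(\rho)=\tfrac{|x|}{\sqrt t}\rho$ vanishes linearly, a short Leibniz/Faà di Bruno count gives $g_{1,a_1}^{(k)}(0)=0$ for $k\leq n-2$ and identifies the $k=n-1$ term. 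Third, the claim is stated for $t\in\R$ rather than $t>0$; for $t<0$ one uses the symmetry $\overline{e^{it\Delta}\phi} = e^{-it\Delta}\bar\phi$ (equivalently, time reversal) to reduce to $t>0$, and for $t=0$ there is nothing to prove. I expect the only genuinely delicate point to be the bookkeeping that isolates exactly the single surviving boundary term $\delta_{m,n}|g_{1,a_1}^{(n-1)}(0)|$ when $m=n$ — everything else is a direct application of Propositions~\ref{prop:SolutionFormulaI}, \ref{prop:AnBnProperties} and~\ref{prop:Xia}.
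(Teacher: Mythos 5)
Your proposal is correct and follows essentially the same route as the paper: the paper performs the same $m$-fold integration by parts against the iterated antiderivatives $\Xi^k_{a_j}$ (writing $g_{j,a_j}(\rho)=\int_0^\rho g_{j,a_j}'$ and swapping integrals rather than displaying boundary terms explicitly, which is equivalent), uses the same vanishing facts at $\rho=0$ from Proposition~\ref{prop:AnBnProperties} and the support of $B_n$, and isolates the same single surviving term $\delta_{m,n}|g_{1,a_1}^{(n-1)}(0)|$ in the case $m=n$ before invoking the uniform bounds of Proposition~\ref{prop:Xia}.
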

 \begin{proof}
  In the following integration-by-parts scheme we use $(\Xi^m_{a_j})'=-\Xi^{m-1}_{a_j}$ as well as
  \begin{equation} \label{eq:gjatzero}
    g_{j,a_j}(0)=g_{j,a_j}'(0)=\ldots=g_{j,a_j}^{(n-2)}(0)=0,\qquad 
    |g_{2,a_2}^{(n-1)}(0)|=|g_{3,a_3}^{(n-1)}(0)|=0,
  \end{equation}
  which follows from~\eqref{eq:defg1a1g2a2g3a3} and Proposition~\ref{prop:AnBnProperties}. 
  Recall that the support of $B_n$ is contained in $[\tfrac{1}{2},\infty)$ by choice of the cut-off function
  $\chi$ so that the above estimate  is actually trivial for $j\in\{2,3\}$. So we have  for
  $m\in\{0,\ldots,n-1\}$
  \begin{align} \label{eq:integrationbyparts}
    \begin{aligned}
    \int_0^\infty g_{j,a_j}(\rho)e^{i(\rho^2+a_j\rho)}\,d\rho  
    &\hspace{-1mm}\stackrel{\eqref{eq:gjatzero}}=  
    \lim_{M\to\infty} \int_0^\infty \left(\int_0^\rho g_{j,a_j}'(t)\,dt \right)
    e^{i(\rho^2+a_j\rho)}\,d\rho  \\
    &=  \lim_{M\to\infty} \int_0^M  g_{j,a_j}'(t) \left(\int_t^M
    e^{i(\rho^2+a_j\rho)}\,d\rho  \right) \,dt \\
    &=  \int_0^\infty g_{j,a_j}'(t) \Xi_{a_j}^0 (t) \,dt \\
    &\hspace{-1mm}\stackrel{\eqref{eq:gjatzero}}=  \int_0^\infty \left(\int_0^t g_{j,a_j}''(s)\,ds\right)
    \Xi^0_{a_j}(t) \,dt \\
    &=  \int_0^\infty g_{j,a_j}''(s) \Xi^1_{a_j}(s) \,ds \\
    &=  \ldots \\
    &=  \int_0^\infty g_{j,a_j}^{(m)}(\rho) \Xi_{a_j}^{m-1}(\rho)\,d\rho.  
  \end{aligned}
  \end{align}
  Notice that the limit $M\to\infty$ passes under the integral because $g_{j,a_j}$ has compact support and
  the $\Xi^k_{a_j}$ are bounded by Proposition~\ref{prop:Xia}. So we obtain for
  $m\in\{0,\ldots,n-1\}$
  \begin{align*} 
    |\psi(x,t)|  
    &\hspace{-1mm}\stackrel{\eqref{eq:represetation_psi}}\leq |x|^{\frac{2-n}{2}}(\sqrt{t})^{-1} \sum_{j=1}^3
    \left| \int_0^\infty g_{j,a_j}^{(m)}(\rho) \Xi^{m-1}_{a_j}(\rho)\,d\rho \right| \\
    &\leq C_{m-1} |x|^{\frac{2-n}{2}}(\sqrt{t})^{-1} \sum_{j=1}^3   \int_0^\infty
    |g_{j,a_j}^{(m)}(\rho)|  \,d\rho. 
  \end{align*}
  Moreover, using~\eqref{eq:integrationbyparts} for $m=n-1$ we get  
  \begin{align*}
    |\psi(x,t)|   
     &= |x|^{\frac{2-n}{2}}(\sqrt{t})^{-1}  \left|\sum_{j=1}^3 \int_0^\infty g_{j,a_j}^{(n-1)}(\rho)
     \Xi_{a_j}^{n-2}(\rho)\,d\rho \right|\\
     &= |x|^{\frac{2-n}{2}}(\sqrt{t})^{-1}\left|\sum_{j=1}^3 \left( g_{j,a_j}^{(n-1)}(0) \int_0^\infty
     \Xi_{a_j}^{n-2}(\rho)\,d\rho + \int_0^\infty \left(\int_0^t g_{j,a_j}^{(n)}(\rho)\,d\rho\right) \Xi_{a_j}^{n-2}(t)\,dt \right) \right|\\  
    &= |x|^{\frac{2-n}{2}}(\sqrt{t})^{-1} \left|\sum_{j=1}^3 \left( g_{j,a_j}^{(n-1)}(0) \Xi_{a_j}^{n-1}(0)  
    +  \int_0^\infty  g_{j,a_j}^{(n)}(\rho) \Xi^{n-1}_{a_j}(\rho)\,d\rho \right)\right| \\
    &\hspace{-1mm}\stackrel{\eqref{eq:gjatzero}}\leq  
      C_{n-1} |x|^{\frac{2-n}{2}}(\sqrt{t})^{-1} \left(|g_{1,a_1}^{(n-1)}(0)| +  \sum_{j=1}^3  \int_0^\infty 
      |g_{j,a_j}^{(n)}(\rho)| \,d\rho \right).
\end{align*}
\end{proof}
 
  For notational convenience we write $x\les y$ respectively $x\gtrsim y$ instead of
  $x\leq c\,y$ respectively $x\geq c\,y$ for positive numbers $c$ that are independent
  of $\omega,|x|,t,r$ but may depend on $m\in\{0,\ldots,n\}$ or the space dimension $n\in\N$.

  \begin{prop}\label{prop:estimategja}
   Let $m\in \{0,\ldots,n\}$. Then the functions $g_{1,a_1},g_{2,a_2},g_{3,a_3}$
   from~\eqref{eq:defg1a1g2a2g3a3} satisfy the following estimates for all $\rho\geq 0$ and $r=2\sqrt t \rho$:
    \begin{align*}
      |g_{1,a_1}^{(m)}(\rho)|
      &\les |x|^{\frac{n-2}{2}} (\sqrt t)^{m-n+2} \sum_{k=0}^m  
      |\phi_\omega^{(k)}(r)| r^{n-m+k-1}\cdot \ind_{[0,1]}\left(\frac{r|x|}{2t}\right) \qquad\text{if }m<n\\          
      |g_{1,a_1}^{(n)}(\rho)|
      &\les \left( |x|^{\frac{n-2}{2}} (\sqrt t)^2
        \sum_{k=1}^n  |\phi_\omega^{(k)}(r)| r^{k-1}
      +  |x|^{\frac{n+2}{2}} (\sqrt t)^{-2} |\phi_\omega(r)|r \right)  \cdot
      \ind_{[0,1]}\left(\frac{r|x|}{2t}\right)  \\
      |g_{2,a_2}^{(m)}(\rho)|+|g_{3,a_3}^{(m)}(\rho)|
      &\les |x|^{-\frac{1}{2}}(\sqrt t)^{1+m}  \sum_{k=0}^{m} 
      |\phi_\omega^{(k)}(r)|r^{\frac{n-1}{2}-m +k}
       \cdot   \ind_{[1/2,\infty)}\left(\frac{r|x|}{2t}\right).
    \end{align*} 
  \end{prop}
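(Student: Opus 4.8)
The plan is to compute the derivatives $g_{j,a_j}^{(m)}$ directly by the Leibniz and chain rules. From~\eqref{eq:defg1a1g2a2g3a3}, each $g_{j,a_j}$ is a product of a $\rho$-independent prefactor $\bigl(\tfrac{|x|}{2t}\bigr)^{-n/2}$, of $\phi_\omega$ evaluated at $r=2\sqrt t\,\rho$, and of $A_n$ (resp.\ $B_n$, $\ov{B_n}$) evaluated at $z:=\tfrac{r|x|}{2t}=\tfrac{|x|}{\sqrt t}\rho$. The decisive structural point is that both $r$ and $z$ are \emph{linear} functions of $\rho$, with $\tfrac{dr}{d\rho}=2\sqrt t$ and $\tfrac{dz}{d\rho}=\tfrac{|x|}{\sqrt t}$; hence differentiating $\rho\mapsto A_n(z(\rho))$ a total of $p$ times simply yields $A_n^{(p)}(z)\,\bigl(\tfrac{|x|}{\sqrt t}\bigr)^p$, and analogously for $B_n(z(\rho))$ and $\phi_\omega(r(\rho))$. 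Consequently $g_{1,a_1}^{(m)}(\rho)$ is a finite linear combination, over $k=0,\dots,m$, of terms of the form (constant)$\cdot\bigl(\tfrac{|x|}{2t}\bigr)^{-n/2}\phi_\omega^{(k)}(r)\,(2\sqrt t)^k\,A_n^{(m-k)}(z)\,\bigl(\tfrac{|x|}{\sqrt t}\bigr)^{m-k}$, and likewise for $g_{2,a_2},g_{3,a_3}$ with $B_n$ respectively $\ov{B_n}$ in place of $A_n$.

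For $g_{1,a_1}$ I would insert the pointwise bounds for $A_n^{(j)}$ from Proposition~\ref{prop:AnBnProperties}. When $m<n$ each derivative order $m-k$ lies in $\{0,\dots,n-1\}$, so $|A_n^{(m-k)}(z)|\les z^{\,n-1-m+k}$, and a bookkeeping of the powers of $|x|$, $\sqrt t$ and $r$ (writing $z=\tfrac{r|x|}{2t}$) collapses the $|x|$-exponent to $\tfrac{n-2}{2}$ and the $\sqrt t$-exponent to $m-n+2$, leaving exactly $r^{\,n-m+k-1}$; the support condition $\supp A_n\subset[0,1]$ supplies the factor $\ind_{[0,1]}(\tfrac{r|x|}{2t})$. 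When $m=n$ the only change is that the $k=0$ term involves $A_n^{(n)}$, for which $|A_n^{(n)}(z)|\les|z|$ rather than $z^{n-1}$; redoing the power count for that single term produces the extra contribution $|x|^{\frac{n+2}{2}}(\sqrt t)^{-2}|\phi_\omega(r)|r$, while the terms $k=1,\dots,n$ behave as before and give $|x|^{\frac{n-2}{2}}(\sqrt t)^{2}\sum_{k=1}^n|\phi_\omega^{(k)}(r)|r^{k-1}$.

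For $g_{2,a_2}$ and $g_{3,a_3}$ the natural object is $\widetilde B_n(z):=B_n(z)z^{\frac{1-n}{2}}$, which by Proposition~\ref{prop:AnBnProperties} obeys $|\widetilde B_n^{(j)}(z)|\les 1$ for $j=0$ and $\les|z|^{-1-j}$ for $j\ge1$. Writing $B_n(z)=z^{\frac{n-1}{2}}\widetilde B_n(z)$ and absorbing $z^{\frac{n-1}{2}}=(\tfrac{r|x|}{2t})^{\frac{n-1}{2}}$ into the prefactor turns $g_{2,a_2}(\rho)$ into $\bigl(\tfrac{|x|}{2t}\bigr)^{-1/2}\bigl(\phi_\omega(r)\,r^{\frac{n-1}{2}}\bigr)\widetilde B_n(z)$. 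Differentiating $m$ times (Leibniz on the product $\phi_\omega(r)r^{\frac{n-1}{2}}$, chain rule as above on $\widetilde B_n(z)$) and then, on $\supp B_n\subset[\tfrac12,\infty)$, using $|z|^{-1-j}\les z^{-j}$ to merge the two branches of the $\widetilde B_n$ estimate, the same power count yields $|x|^{-1/2}$, $(\sqrt t)^{1+m}$ and $r^{\frac{n-1}{2}-m+k}$ for $k=0,\dots,m$, with cut-off $\ind_{[1/2,\infty)}(\tfrac{r|x|}{2t})$ from $\supp B_n$; the estimate for $g_{3,a_3}$ is identical since $\ov{B_n}$ has the same moduli of derivatives. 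The one genuine subtlety is the half-integer exponent $\tfrac{n-1}{2}$ appearing for even $n$: one must note that repeatedly differentiating $r^{\frac{n-1}{2}}$ only produces constants times $r^{\frac{n-1}{2}-p}$ (these powers may be negative, but this is harmless on the support, where $r\gtrsim t/|x|>0$), and that for odd $n$ some such terms vanish, which only makes the claimed sum an over-estimate. Everything else is routine power bookkeeping, and the independence of all constants from $\omega$ is automatic because $\omega$ no longer appears in~\eqref{eq:defg1a1g2a2g3a3}.
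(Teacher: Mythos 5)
Your proof is correct and follows essentially the same route as the paper: a Leibniz/chain-rule expansion of $g_{j,a_j}^{(m)}$ followed by the pointwise bounds of Proposition~\ref{prop:AnBnProperties} and power bookkeeping in $|x|,\sqrt t,r$. The only cosmetic difference is in the $B_n$ terms, where you factor out $z^{\frac{n-1}{2}}$ and differentiate $\phi_\omega(r)r^{\frac{n-1}{2}}$ and $B_n(z)z^{\frac{1-n}{2}}$ separately, whereas the paper first combines the bounds of Proposition~\ref{prop:AnBnProperties} into $|B_n^{(j)}(z)|\les|z|^{\frac{n-1}{2}-j}$ and then applies Leibniz directly to $\phi_\omega(r)B_n(z)$; the two computations are equivalent.
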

  \begin{proof}
    We get for $r=2\sqrt t \rho$ and $m\in\{0,\ldots,n-1\}$
    \begin{align*}
      |g_{1,a_1}^{(m)}(\rho)|
      &\stackrel{\eqref{eq:defg1a1g2a2g3a3}}{=} (2\sqrt t)^m \left|\frac{d^m}{dr^m}
      \left(\phi_\omega(r)A_n\left(\frac{r|x|}{2t}\right) \right)\right| 
      \left(\frac{|x|}{2t}\right)^{-\frac{n}{2}}    \\
      &\les (\sqrt t)^m \sum_{k=0}^m  \left| \phi_\omega^{(k)}(r) 
       A_n^{(m-k)}\left(\frac{r|x|}{2t}\right)\right| 
       \left(\frac{|x|}{2t}\right)^{m-k-\frac{n}{2}}  \\
      &\hspace{-3mm}\stackrel{\text{Prop. }\ref{prop:AnBnProperties}}{\les}  (\sqrt t)^m \sum_{k=0}^m  
      |\phi_\omega^{(k)}(r)| \left(\frac{r|x|}{2t}\right)^{n-1-m+k}
      \left(\frac{|x|}{2t}\right)^{m-k-\frac{n}{2}} \cdot \ind_{[0,1]}\left(\frac{r|x|}{2t}\right)   \\
      &\les |x|^{\frac{n-2}{2}} (\sqrt t)^{m-n+2} \sum_{k=0}^m  
      |\phi_\omega^{(k)}(r)| r^{n-1-m+k}\cdot \ind_{[0,1]}\left(\frac{r|x|}{2t}\right). \\          
    \intertext{This implies the first estimate. For $m=n$ we use the estimate for $A_n^{(j)}$ from 
    Proposition~\ref{prop:AnBnProperties} for $j\in\{0,\ldots,n\}$ and obtain
    }
      |g_{1,a_1}^{(n)}(\rho)|
      &\stackrel{\eqref{eq:defg1a1g2a2g3a3}}{=} (2\sqrt t)^n \left|\frac{d^n}{dr^n}
      \left(\phi_\omega(r)A_n\left(\frac{r|x|}{2t}\right) \right)\right| 
      \left(\frac{|x|}{2t}\right)^{-\frac{n}{2}}    \\
      &\les (\sqrt t)^n \sum_{k=0}^n  \left| \phi_\omega^{(k)}(r) 
       A_n^{(n-k)}\left(\frac{r|x|}{2t}\right)\right| 
       \left(\frac{|x|}{2t}\right)^{\frac{n}{2}-k}  \\
      &\les  (\sqrt t)^n \left( \sum_{k=1}^n  
      |\phi_\omega^{(k)}(r)| \left(\frac{r|x|}{2t}\right)^{-1+k} \left(\frac{|x|}{2t}\right)^{\frac{n}{2}-k}
      +   |\phi_\omega(r)| \frac{r|x|}{2t}  \left(\frac{|x|}{2t}\right)^{\frac{n}{2}} 
       \right) \cdot \ind_{[0,1]}\left(\frac{r|x|}{2t}\right)  \\
      &\les \left( |x|^{\frac{n-2}{2}} (\sqrt t)^2
        \sum_{k=1}^n |\phi_\omega^{(k)}(r)| r^{k-1}
      +  |x|^{\frac{n+2}{2}} (\sqrt t)^{-2} |\phi_\omega(r)|r \right)  \cdot
      \ind_{[0,1]}\left(\frac{r|x|}{2t}\right). 
   \end{align*}
   This yields the second estimate. The third estimate results from 
   \begin{equation} \label{eq:Bnestimates}
     |B_n^{(j)}(z)|\les  |z|^{\frac{n-1}{2}-j},
   \end{equation}
   which is a consequence of Proposition~\ref{prop:AnBnProperties}.
    \begin{align*}
      |g_{2,a_2}^{(m)}(\rho)|+|g_{2,a_3}^{(m)}(\rho)|
      &\stackrel{\eqref{eq:defg1a1g2a2g3a3}}{=}  2(2\sqrt t)^m\left|\frac{d^m}{dr^m}
      \left(\phi_\omega(r)B_n\left(\frac{r|x|}{2t}\right)\right) \right| 
      \left(\frac{|x|}{2t}\right)^{-\frac{n}{2}}
      \\
       &\les  (\sqrt t)^{m} \sum_{k=0}^{m} 
      |\phi_\omega^{(k)}(r)| \left|B_n^{(m-k)}\left(\frac{r|x|}{2t}\right)\right|
      \left(\frac{|x|}{2t}\right)^{m-k-\frac{n}{2}}   \\
      &\stackrel{\eqref{eq:Bnestimates}}\les (\sqrt t)^{m} \sum_{k=0}^{m} 
      |\phi_\omega^{(k)}(r)|
       \left(\frac{r|x|}{2t}\right)^{\frac{n-1}{2}-m +k}
        \left(\frac{|x|}{2t}\right)^{m-k-\frac{n}{2}}       
       \cdot   \ind_{[1/2,\infty)}\left(\frac{r|x|}{2t}\right) \\
     &\les |x|^{-\frac{1}{2}}(\sqrt t)^{m+1}  \sum_{k=0}^{m} 
      |\phi_\omega^{(k)}(r)|r^{\frac{n-1}{2}-m +k}
       \cdot   \ind_{[1/2,\infty)}\left(\frac{r|x|}{2t}\right).
    \end{align*} 
  \end{proof}
    
  \medskip
  
  \noindent\textbf{Proof of Theorem~\ref{thm:dispest}~(i)}: We combine the estimates from
  Proposition~\ref{prop:SolutionEstimate} and Proposition~\ref{prop:estimategja}. Under the assumption
  $\phi_\omega\in C_c^\infty(\R_{\geq 0};\C)$ we get for all $m\in \{0,\ldots,n-1\}$ 
    \begin{align*} 
      |\psi(x,t)|\; 
      &\hspace{-3mm}\stackrel{\text{Prop. }\ref{prop:SolutionEstimate}}{\les}  
      |x|^{\frac{2-n}{2}}(\sqrt t)^{-1}   \int_0^\infty |g_{1,a_1}^{(m)}(\rho)|
      +|g_{2,a_2}^{(m)}(\rho)|+|g_{3,a_3}^{(m)}(\rho)| \,d\rho \\
      &\hspace{-3mm}\stackrel{\text{Prop. }\ref{prop:estimategja}}{\les}  (\sqrt t)^{m-n+1}  \sum_{k=0}^m 
      \int_0^{\frac{\sqrt t}{|x|}} |\phi_\omega^{(k)}(2\sqrt t \rho)| (2\sqrt t \rho)^{n-m+k-1}  \,d\rho  \\
     &\qquad + |x|^{\frac{1-n}{2}}(\sqrt t)^{m}
    \sum_{k=0}^{m} 
    \int_{\frac{\sqrt t}{2|x|}}^\infty
       |\phi_\omega^{(k)}(2\sqrt t \rho)| (2\sqrt t \rho)^{\frac{n-1}{2}-m+k} \,d\rho \\ 
      &\les   (\sqrt t)^{m-n} \sum_{k=0}^m  \int_0^{\frac{2t}{|x|}}
    |\phi_\omega^{(k)}(r)|  r^{n-m+k-1} \,dr  \\
    & \qquad  + (\sqrt t)^{m-n}   \left(\frac{t}{|x|}\right)^{\frac{n-1}{2}}   
    \sum_{k=0}^{m} \int_{\frac{t}{|x|}}^\infty
      |\phi_\omega^{(k)}(r)|  r^{\frac{n-1}{2}-m+k} \,dr \\
    &\les   (\sqrt t)^{m-n} \sum_{k=0}^m  \left( \int_0^{\frac{2t}{|x|}}
    |\phi_\omega^{(k)}(r)|  r^{n-m+k-1} \,dr + \int_{\frac{t}{|x|}}^\infty
      |\phi_\omega^{(k)}(r)|  r^{n-m+k-1} \,dr \right) \\  
    &\les  (\sqrt t)^{m-n} \|\phi_\omega\|_{Y_m}. 
    \end{align*}
    In the case $m=n$ we use the second estimate in
    Proposition~\ref{prop:estimategja}  instead of the first one. By density of $C_c^\infty(\R_{\geq 0};\C)$ in $Y_m$
    the result follows.
    
    \medskip

  \noindent\textbf{Proof of Theorem~\ref{thm:dispest}~(ii)}: For $r=2\sqrt t \rho$ we use 
  $$
    |g_{1,a_1}'(\rho)|
    \les |x|^{\frac{n-2}{2}} (\sqrt t)^{3-n} \left( |\phi_\omega(r)| r^{n-2}
      + |\phi_\omega'(r)| r^{n-1} \right)   \cdot \ind_{[0,1]}\left(\frac{r|x|}{2t}\right) 
  $$    
  as well as    
  \begin{align*}
      &|g_{2,a_2}'(\rho)|+|g_{2,a_3}'(\rho)| \\
      &\stackrel{\eqref{eq:defg1a1g2a2g3a3}}{=}  4\sqrt t \left|\frac{d}{dr}
      \left(\phi_\omega(r)B_n\left(\frac{r|x|}{2t}\right)\right) \right| 
      \left(\frac{|x|}{2t}\right)^{-\frac{n}{2}}   \\
       &\les  \sqrt t  
      \left|\frac{d}{dr}\left( \phi_\omega(r)r^{\frac{n-1}{2}}\right)\right|   
      \left| r^{\frac{1-n}{2}} B_n\left(\frac{r|x|}{2t}\right) \right|
      \left(\frac{|x|}{2t}\right)^{-\frac{n}{2}}  \\
      &\quad+  \sqrt{t} |\phi_\omega(r)|r^{\frac{n-1}{2}} 
      \left|\frac{d}{dr}\left(r^{\frac{1-n}{2}} B_n\left(\frac{r|x|}{2t}\right)\right)\right| 
      \left(\frac{|x|}{2t}\right)^{-\frac{n}{2}}        \\
      &\hspace{-3mm}\stackrel{\text{Prop. }\ref{prop:AnBnProperties}}\les  
      \sqrt t
      \left|\frac{d}{dr}\left( \phi_\omega(r)r^{\frac{n-1}{2}}\right)\right| r^{\frac{1-n}{2}}
      \left(\frac{r|x|}{2t}\right)^{\frac{n-1}{2}} \left(\frac{|x|}{2t}\right)^{-\frac{n}{2}}
      \cdot \ind_{[1/2,\infty)}\left(\frac{r|x|}{2t}\right)\\
      &\quad + \sqrt t |\phi_\omega(r)|r^{\frac{n-1}{2}} 
        \left(\frac{r|x|}{2t}\right)^{-2}   
      \left(\frac{|x|}{2t}\right)^{\frac{1}{2}} \cdot \ind_{[1/2,\infty)}\left(\frac{r|x|}{2t}\right) \\
      & \les  \left( |x|^{-\frac{1}{2}}(\sqrt t)^2
      \left|\frac{d}{dr}\left( \phi_\omega(r)r^{\frac{n-1}{2}}\right)\right| 
      + |x|^{-\frac{3}{2}}(\sqrt t)^4 |\phi_\omega(r)|r^{\frac{n-5}{2}}\right)  
      \cdot \ind_{[1/2,\infty)}\left(\frac{r|x|}{2t}\right).
    \end{align*} 
  This implies 
  \begin{align*} 
      |\psi(x,t)|\; 
      &\hspace{-3mm}\stackrel{\text{Prop. }\ref{prop:SolutionEstimate}}{\les}  
      |x|^{\frac{2-n}{2}}(\sqrt t)^{-1}   \int_0^\infty |g_{1,a_1}^{(m)}(\rho)|
      +|g_{2,a_2}^{(m)}(\rho)|+|g_{3,a_3}^{(m)}(\rho)| \,d\rho \\ 
      &\hspace{-3mm}\stackrel{\text{Prop. }\ref{prop:estimategja}}{\les}  
      (\sqrt t)^{2-n}    \int_0^{\frac{\sqrt t}{|x|}} \left( |\phi_\omega(2\sqrt t \rho)|  (2\sqrt t
      \rho)^{n-2} + |\phi_\omega'(2\sqrt t \rho)|  (2\sqrt t \rho)^{n-1} \right) \,d\rho  \\
      &\quad +    |x|^{\frac{1-n}{2}}\sqrt t
       \int_{\frac{\sqrt t}{2|x|}}^\infty   
       \left|\frac{d}{dr}\left( \phi_\omega(r)r^{\frac{n-1}{2}}\right)\right|_{r=2\sqrt t \rho}  \,d\rho \\
      &\quad + |x|^{\frac{-1-n}{2}} (\sqrt t)^3  \int_{\frac{\sqrt t}{2|x|}}^\infty  |\phi_\omega(2\sqrt t
      \rho)|(2\sqrt t \rho)^{\frac{n-5}{2}} \,d\rho \\
     &\les   |x|^{\frac{1-n}{2}}   \left(\frac{t}{|x|}\right)^{\frac{1-n}{2}} \int_0^{\frac{2t}{|x|}}
     \left(|\phi_\omega(r)| r^{n-2} + |\phi_\omega'(r)|  r^{n-1} \right) \,dr  \\
     &\quad + |x|^{\frac{1-n}{2}}  \left(
       \int_{\frac{t}{|x|}}^\infty  
       \left|\frac{d}{dr}\left( \phi_\omega(r)r^{\frac{n-1}{2}}\right)\right| \,dr
       + \frac{t}{|x|}
        \int_{\frac{t}{|x|}}^\infty |\phi_\omega(r)|r^{\frac{n-5}{2}} \,dr
       \right)\\
     &\les  |x|^{\frac{1-n}{2}} \|\phi_\omega\|_X.   
  \end{align*} 
   So we get the result by density of $C_c^\infty(\R_{\geq 0};\C)$ in $X$. 
 \qed

\section{Proof of Theorem~\ref{thm:NoStrichartz}}

We estimate the solution of the Schr\"odinger equation for the initial datum  
$$
  \phi(x)=\phi_{\rad}(|x|)\quad\text{where } \phi_{\rad}(\rho):= e^{-i\frac{\rho^2}{4}} \ind_{\rho\geq 1}
  \rho^{-\sigma} 
$$ 
and $\sigma$ is chosen according to $\frac{n-3}{2}<\sigma<n$. In this case, the formula~\eqref{eq:SolutionFormulaI} is well-defined and provides a
solution of the initial value problem~\eqref{eq:SchroedingerFlow}.
In~\cite[Section~2.1]{BoPoSaSp_DispersiveBlowup} it was shown that the solution $\psi$ blows up in $L^\infty(\R^n)$ as $t\to 1$
provided that $\frac{n}{2}<\sigma<n$ holds. In fact, in this case the
function $a(x):= \ind_{|x|\geq 1} |x|^{-\sigma}$ lies in
$L^2(\R^n)$ but not in $L^1(\R^n)$ so that \cite[Remark~2.2]{BoPoSaSp_DispersiveBlowup} applies. 
We now generalize this analysis to the range $\frac{n-3}{2}<\sigma<n$ and detect a selfsimilar blow-up 
in $L^q(\R^n)$ for all $q>\frac{n}{n-\sigma}$ and a lower estimate for the corresponding blow-up
rate then implies $\|\psi\|_{L^p(\R,L^q(\R^n))}=\infty$ for $p\geq \frac{2q}{((n-\sigma)q-n)_+}$. From
this we will finally deduce the nonvalidity of Strichartz estimates for initial data $\phi\in L^r(\R^n)$ where
$r>2$.

\medskip

We set $k_t:= \sqrt{\frac{1}{4t}-\frac{1}{4}}$ for $0\leq t<1$ and write
$\psi(x)=\psi_{\rad}(|x|)$. We get for $|x|=2tk_t z$  
\begin{align*}
  2|\psi(x,t)| k_t^{n-\sigma}
  &\stackrel{\eqref{eq:SolutionFormulaI}}=   
  |x|^{\frac{2-n}{2}} t^{-1} k_t^{n-\sigma} \left| \int_0^\infty J_{\frac{n-2}{2}}\left(\frac{\rho
  |x|}{2t}\right)\phi_{\rad}(\rho)\rho^{\frac{n}{2}}e^{i\frac{\rho^2}{4t}}\,d\rho \right| \\
  &= |x|^{\frac{2-n}{2}}t^{-1} k_t^{n-\sigma} \left| \int_1^\infty J_{\frac{n-2}{2}}\left(\frac{\rho
  |x|}{2t}\right)\rho^{\frac{n}{2}-\sigma}e^{i\rho^2 k_t^2}\,d\rho \right| \\
  &= (2tk_t z)^{\frac{2-n}{2}}t^{-1} k_t^{n-\sigma}  \left| \int_1^\infty J_{\frac{n-2}{2}}( \rho
  k_t z)\rho^{\frac{n}{2}-\sigma}e^{i\rho^2 k_t^2}\,d\rho \right| \\
  &= (2tk_t z)^{\frac{2-n}{2}}t^{-1} k_t^{\frac{n-2}{2}} 
  \left| \int_{k_t}^\infty J_{\frac{n-2}{2}}(sz)s^{\frac{n}{2}-\sigma}e^{is^2}\,ds \right| \\
  &\to  (2z)^{\frac{2-n}{2}} 
  \left| \int_0^\infty J_{\frac{n-2}{2}}(sz)s^{\frac{n}{2}-\sigma}e^{is^2}\,ds \right|
  \qquad\text{as }t\to 1 
\end{align*}
and the convergence is locally uniform in $z\in (0,\infty)$ due to  $\frac{n-3}{2}<\sigma<n$. 
Since the right hand side is not identically zero we may find $\delta>0$ and
radii $0<R_1<R_2$ such that 
$$
  |\psi(x,t)| 
  \gtrsim\, k_t^{\sigma-n} \qquad\text{for } R_1k_t \leq |x|  \leq R_2k_t \;\text{ and
  }\; 1-\delta<t<1.
$$ 
Hence we get 
\begin{align*}
  \int_{1-\delta}^1 \left(\int_{B_{R_2k_t}(0)\sm B_{R_1k_t}(0)} |\psi(x,t)|^q\,dx\right)^{p/q}\,dt
  &\gtrsim \int_{1-\delta}^1 \left(\int_{R_1k_t}^{R_2k_t} r^{n-1}  k_t^{(\sigma-n)q}\,dr \right)^{p/q}\,dt
  \\
  &\gtrsim   \int_{1-\delta}^1 \left( k_t^{n+(\sigma-n)q}  \right)^{p/q}\,dt \\
  &\gtrsim \int_{1-\delta}^1 (1-t)^{\frac{p}{2q}(n+(\sigma-n)q)} \,dt.  
\end{align*}  
This integral is finite if and only if $\frac{p}{2q}(n+(\sigma-n)q) >-1$. 
So $\psi\in L^p(\R;L^q(\R^n))$ can only hold for $p < \frac{2q}{((n-\sigma)q-n)_+}$. 
Moreover, the initial datum lies in $L^r(\R^n)$ if and only if $\sigma>\frac{n}{r}$. So, 
for any $r>1$ we can consider the limit $\sigma\searrow \max\{\frac{n}{r},\frac{n-3}{2}\}$ 
we find that the validity of the Strichartz estimate \label{eq:Strichartz_est} with initial datum in
$L^r(\R^n), r>1$ implies
\begin{equation} \label{eq:bound_for_p}
  p 
  \leq \frac{2q}{((n-\max\{\frac{n}{r},\frac{n-3}{2}\})q-n)_+}
  = \max\left\{\frac{2qr}{n((r-1)q-r)_+},\frac{4q}{((n+3)q-2n)_+}\right\}. 
\end{equation}
On the other hand, the scaling invariance of the Schr\"odinger equation implies
$\frac{2}{p}+\frac{n}{q}=\frac{n}{r}$ and thus $p=\frac{2qr}{n(q-r)}$. Plugging this
into~\eqref{eq:bound_for_p} we obtain $r\leq 2$. Hence, the Strichartz
estimate~\eqref{eq:Strichartz_est} cannot hold for any $r>2$, which finishes the proof. \qed

\section{Appendix}
    
  In this Appendix we briefly discuss the restriction $p\geq 2$ in the context of Strichartz estimates of the
  form 
  $$
     \|e^{it\Delta} \phi\|_{L^p_t(\R;L^q(\R^n))} \les \|\phi\|_{L^2(\R^n)},
  $$
  which results from an abstract reasoning involving translation-invariant operators due to
  H\"ormander~\cite{Hoermander}, see~\cite[p.970-971]{KeelTao}. Here we provide a family of explicit
  counterexamples that not only implies $p\geq 2$ for square integrable initial data, but even shows $p\geq
  \frac{2r}{(2n-r(n-1))_+}$ for initial data in $L^r(\R^n)$ with $r>\frac{2n}{n+1}$. In order to avoid
  lengthy computations involving oscillatory integrals, we only sketch the proofs.
  The starting point is a reasonable choice of an initial condition. We choose
  \begin{equation*}
    \phi(x)  = |x|^{\frac{2-n}{2}} \int_1^2 (\omega-1)^{-\delta} J_{\frac{n-2}{2}}(\omega |x|)\,d\omega  
  \end{equation*}
  for   $\delta\in (0,1)$. This function  corresponds to a singular superposition of Herglotz
  waves, cf. Remark~\ref{rem}~(c). It is smooth and lenghty computations involving the van der Corput Lemma
  \cite[p.334]{Stein_Harmonic} reveal 
  $$
    |\phi(x)|
    \sim 2|x|^{\frac{1-n}{2}-(1-\delta)}   \int_0^\infty \Real\left(e^{i\rho} \alpha_0 
    e^{i(|x|-\frac{(n-1)\pi}{4})} \right) \rho^{-\delta}\,d\rho
    \qquad\text{as }|x|\to\infty
  $$
  where $\alpha_0>0$ is the domininant term in the series expansion of the Bessel funtion near infinity,
  see~\eqref{eq:def_alphak}. In particular we get $\phi\in L^r(\R^n)$ if and only if
  $\delta<\frac{n+1}{2}-\frac{n}{r}$. 
  
  \medskip
  
  The above choice for the initial datum allows to write down the corresponding solution of the
  Schr\"odinger equation semi-explicitly via 
  $$
    \widehat{\psi(\cdot,t)}(\xi)
    = e^{-it|\xi|^2}\hat \phi(\xi)
    =  e^{-it|\xi|^2} |\xi|^{-\frac{n}{2}} (|\xi|-1)^{-\delta} \ind_{[1,2]}(|\xi|).
  $$ 
  Hence, one gets
  \begin{align*}
    \psi(x,t) 
     &= |x|^{\frac{2-n}{2}} 
     \int_1^2 e^{-it\omega^2} (\omega-1)^{-\delta} J_{\frac{n-2}{2}}(\omega|x|)  \,d\omega
  \end{align*}
 and the van der Corput Lemma implies
  $|\psi(x,t)| \gtrsim c t^{\delta-1}$  for small $|x|$ and large $t$. In particular, 
  $\psi\in L^p(\R;L^q(\R^n))$ implies $p(1-\delta)>1$. So we conclude that for any $r\in
  (\frac{2n}{n+1},\infty]$ we can consider the limit $\delta\nearrow \min\{1,\frac{n+1}{2}-\frac{n}{r}\}$ in
  the above computations and obtain that $\psi\in L^p(\R;L^q(\R^n))$ implies 
  \begin{align}\label{eq:p_lower_bound} 
      p \geq \frac{2r}{(2n-r(n-1))_+}\qquad\text{provided }r>\frac{2n}{n+1}.
  \end{align} 
  For $r=2$, i.e., for square integrable initial data, this implies $p\geq 2$, which is all we wanted
  to demonstrate.
  
  \medskip
  
  Let us mention that a detailed analysis of $\psi$ reveals $\psi\in L^p(\R;L^q(\R^n))$ if and only if 
  \begin{align*}
      \begin{aligned}
      q &> \max\left\{ \frac{2n-1}{n-\delta}, \frac{2n}{n+1-2\delta}\right\} \qquad\text{and} \\
      p &>\max\left\{\frac{1}{1-\delta},\frac{2q}{q(n-\delta)+1-2n}, \frac{2q}{q(n+1-2\delta)-2n}\right\}.
    \end{aligned}
  \end{align*}
  Keeping the scaling condition $\frac{2}{p}+\frac{n}{q}=\frac{n}{r}$ in mind, these a priori more restrictive
  conditions do however not result in stronger necessary conditions than~\eqref{eq:p_lower_bound}, so that further necessary
  conditions cannot be deduced from this example. 
   
\section*{Acknowledgements}

Funded by the Deutsche Forschungsgemeinschaft (DFG, German Research Foundation)
- Project-ID 258734477 - SFB 1173. The author thanks Dominic Scheider (Karlsruhe Institute of
Technology) and Martin Spitz (University of Bielefeld) for valuable remarks and discussions leading to an
improvement of the manuscript.

\bibliographystyle{abbrv}	
\bibliography{doc}

\begin{thebibliography}{1}

\bibitem{BoPoSaSp_DispersiveBlowup}
J.~L. Bona, G.~Ponce, J.-C. Saut, and C.~Sparber.
\newblock Dispersive blow-up for nonlinear {S}chr\"{o}dinger equations
  revisited.
\newblock {\em J. Math. Pures Appl. (9)}, 102(4):782--811, 2014.

\bibitem{Caz_Semilinear}
T.~Cazenave.
\newblock {\em Semilinear {S}chr\"{o}dinger equations}, volume~10 of {\em
  Courant Lecture Notes in Mathematics}.
\newblock New York University, Courant Institute of Mathematical Sciences, New
  York; American Mathematical Society, Providence, RI, 2003.

\bibitem{GiVe}
J.~Ginibre and G.~Velo.
\newblock The global {C}auchy problem for the nonlinear {S}chr\"{o}dinger
  equation revisited.
\newblock {\em Ann. Inst. H. Poincar\'{e} Anal. Non Lin\'{e}aire},
  2(4):309--327, 1985.

\bibitem{Hoermander}
L.~H\"{o}rmander.
\newblock Estimates for translation invariant operators in {$L^{p}$}\ spaces.
\newblock {\em Acta Math.}, 104:93--140, 1960.

\bibitem{KeelTao}
M.~Keel and T.~Tao.
\newblock Endpoint {S}trichartz estimates.
\newblock {\em Amer. J. Math.}, 120(5):955--980, 1998.

\bibitem{LinPon_Dispersive}
F.~Linares and G.~Ponce.
\newblock {\em Introduction to nonlinear dispersive equations}.
\newblock Universitext. Springer, New York, second edition, 2015.

\bibitem{Stein_Harmonic}
E.~M. Stein.
\newblock {\em Harmonic analysis: real-variable methods, orthogonality, and
  oscillatory integrals}, volume~43 of {\em Princeton Mathematical Series}.
\newblock Princeton University Press, Princeton, NJ, 1993.
\newblock With the assistance of Timothy S. Murphy, Monographs in Harmonic
  Analysis, III.

\bibitem{Stri}
R.~S. Strichartz.
\newblock Restrictions of {F}ourier transforms to quadratic surfaces and decay
  of solutions of wave equations.
\newblock {\em Duke Math. J.}, 44(3):705--714, 1977.

\bibitem{Wat_Bessel}
G.~N. Watson.
\newblock {\em A treatise on the theory of {B}essel functions}.
\newblock Cambridge Mathematical Library. Cambridge University Press,
  Cambridge, 1995.
\newblock Reprint of the second (1944) edition.

\end{thebibliography}

 \end{document}